\newtheorem{thm}{Theorem}[section]
\newtheorem{Thm}{Theorem}
\newtheorem{pro}[thm]{Proposition}
\newtheorem{lem}[thm]{Lemma}
\newtheorem{Cor}[Thm]{Corollary}
\newcommand{\qbinom}[2]{\text{$\left[\begin{array}{c}#1\\ #2\end{array}
\right]$}}
\theoremstyle{definition}
\def\1{{\rm1\mathchoice{\kern-0.25em}{\kern-0.25em}
        {\kern-0.2em}{\kern-0.2em}I}}
\newcommand{\lmn}[1]{\vadjust{\setbox1=\vtop{\hsize 25mm
\parindent=0pt\baselineskip=9pt
\rightskip=4mm plus 4mm#1}
\hbox{\kern-26mm\smash{\raise .5ex\box1}}}}
\newcommand{\nc}{\newcommand}
\def\be#1\ee{\begin{equation}#1\end{equation}}
\nc{\bc}{\begin{center}} \nc{\ec}{\end{center}} \nc{\bb}{\mathbb}
\nc{\cal}{\mathcal} \nc{\frk}{\mathfrak} \nc{\N}{{\mathsf N}}
\nc{\K}{{\mathsf K}} \nc{\fk}{\mathbf{k}} \nc{\fn}{\mathbf{n}}
\nc{\fb}{\mathbf{b}}  \nc{\e}{\varepsilon} \nc{\ev}{{\rm{ev}}}
\theoremstyle{remark}
\def\Z{{\mathbb Z}}
\def\Q{{\mathbb Q}}
\def\N{{\mathbb N}}
\def\v8{\vskip 8pt}
\def\la{\langle}
\def\ra{\rangle}
\def\l{\lambda}
\def\n{\nu}
\def\g{\gamma}
\def\m{\mu}
\def\ZZ{\widehat{\Z[v]}_2}
\def\ZZZ{\widehat{\Z[v]}_s}
\def\Habiro{\widehat{\Z[q]}}
\def\R{\mathcal R}
\def\BN{\mathbb N}
\def\BZ{\mathbb Z}
\def\BQ{\mathbb Q}
\def\BR{\mathbb R}
\def\cT{\mathcal T}
\def\cM{\mathcal M}
\begin{document}

\title[Unified quantum invariants and their refinements ]{Unified
quantum invariants and their refinements\\[.2cm]
for homology  3--spheres with 2--torsion}
\author{Anna Beliakova}
\address{Institut f\"ur Mathematik, Universit\"at Z\"urich,
 Winterthurerstrasse 190,
CH-8057 Z\"urich, Switzerland}
\email{anna@math.unizh.ch}
\author{Christian Blanchet}
\address{L.M.A.M., Universit\'e de Bretagne-Sud,
Centre de Recherche Tohannic, BP 573, F-56017 Vannes, France  }
\email{Christian.Blanchet@univ-ubs.fr}
\author{Thang T.Q. L\^e}
\address{School of Mathematics, Georgia Institute of Technology, Atlanta,
GA 30332-0160, USA} \email{letu@math.gatech.edu}
\date{February 2007}
\keywords{Quantum invariants, Jones polynomial, Ohtsuki series,
cyclotomic completion ring, q--hypergeometric series}

\begin{abstract}
For every  rational homology $3$--sphere with $H_1(M,\Z)=(\Z/2\Z)^n$
we construct a unified invariant (which takes values in a certain
cyclotomic completion of a polynomial ring), such that
the evaluation of this invariant at any  odd  root of unity
provides the SO(3)  Witten--Reshetikhin--Turaev invariant at this root
and  at any even root of unity
 the SU(2) quantum invariant.
Moreover, this unified invariant splits into a sum of the refined unified
invariants dominating spin and cohomological refinements of
quantum SU(2) invariants.
New results on the Ohtsuki series and the integrality  of quantum invariants
are the main applications of our construction.

\end{abstract}

\maketitle
\section*{Introduction}

The Witten--Reshetikhin--Turaev (WRT) invariants of 3--manifolds,
also known as
  quantum invariants,
  are defined only when the quantum parameter $q$ is a
certain root of unity. In \cite{Ha},
Habiro proposed a construction of a
unified  invariant of integral homology $3$--spheres,
 dominating all  quantum $SU(2)$ invariants.

The unified invariant  is an element of the Habiro
ring
$$ \Habiro:=\lim_{\overleftarrow{\hspace{2mm}n\hspace{2mm}}}
\frac{ \Z[q]}{
(1-q)(1-q^2)...(1-q^n)}\, .$$
Every element $f\in \Habiro$ can be written as an infinite sum
$$ f(q)= \sum_{k\ge 0} f_k(q)\, (1-q)(1-q^2)...(1-q^k),$$
with $f_k(q)\in \Z[q]$. If $\xi$ is a root of unity, then $f(\xi)$ is
well--defined, since the summands become zero if $k$ is bigger than the
order of $\xi$. The Habiro ring has remarkable properties and is very
suitable for the study of quantum invariants. The result of Habiro
 mentioned above is

\begin{Thm}{\rm (Habiro)}
For every   integral homology $3$--sphere $M$, there exists an
invariant $I_M(q)\in \Habiro$, such that if $\xi$ is a root of
unity, then $I_M(\xi)$ is the WRT invariant at $\xi$.
\end{Thm}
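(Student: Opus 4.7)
My plan is to construct $I_M(q)$ from a surgery presentation of $M$ and verify its value at every root of unity via a cyclotomic expansion of the colored Jones polynomial. By a theorem of Ohtsuki, any integral homology sphere admits a presentation $M = S^3_L$ with $L$ an algebraically split framed link whose every framing is $\pm 1$. With such a presentation, the WRT invariant at a primitive $2r$--th root of unity $\xi$ takes the form
$$\tau_M(\xi) \;=\; \frac{F_L(\xi)}{F_{U_+}(\xi)^{\sigma_+}\, F_{U_-}(\xi)^{\sigma_-}},$$
where $F_L(\xi) = \sum_{\vec n} \prod_i [n_i+1]_\xi\, J'_L(\vec n;\xi)$ is the usual weighted colored Jones sum, $U_\pm$ are the $\pm 1$--framed unknots, and $\sigma_\pm$ count the $\pm 1$--framed components of $L$. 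The goal is to upgrade this ratio to a single element of $\Habiro$.

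The central input is Habiro's cyclotomic expansion: for an algebraically split link $L$ one writes
$$J'_L(\vec n;q) \;=\; \sum_{\vec k\ge 0}\, C_L(\vec k;q)\, \prod_i b_{k_i}(n_i;q),$$
with $C_L(\vec k;q)\in\Z[q^{\pm 1}]$ supported in finitely many $\vec k$, and ``cyclotomic basis elements'' $b_k(n;q)$ that vanish for $n<k$. I would establish this by induction on crossing number using the Kauffman bracket skein relations and the Jones--Wenzl idempotent recursion; the algebraically split hypothesis is exactly what is needed to kill the off--diagonal terms so that the coefficients lie in $\Z[q^{\pm 1}]$ rather than in a localisation.

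Substituting the expansion into $F_L(\xi)$ and interchanging the (finite, at fixed $\xi$) sums, the inner sum over each $n_i$ becomes a Gauss--type sum
$$G_\pm(k;\xi) \;=\; \sum_{n=0}^{r-2} \xi^{\mp n(n+2)/4}\, [n+1]_\xi\, b_k(n;\xi),$$
and the task is to recognise $G_\pm(k;\xi)$ as the evaluation at $\xi$ of a universal element $G_\pm(k;q)\in\Habiro$ enjoying enough divisibility by $(1-q)(1-q^2)\cdots(1-q^k)$ to absorb the denominators produced by the unknot normalisation. Once this is in place, the formal series
$$I_M(q) \;:=\; \sum_{\vec k\ge 0}\, C_L(\vec k;q)\, \frac{\prod_i G_{\epsilon_i}(k_i;q)}{G_+(0;q)^{\sigma_+}\, G_-(0;q)^{\sigma_-}}$$
converges in $\Habiro$ and by construction takes the value $\tau_M(\xi)$ at every root of unity.

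The main obstacle will be the twin divisibility estimates: integrality of $C_L(\vec k;q)$ for \emph{multicomponent} algebraically split links, and a matching divisibility of the Gauss sums $G_\pm(k;q)$ by $(1-q)\cdots(1-q^k)$. Both are delicate, since either alone is not enough---they must be perfectly calibrated so that the infinite sum defining $I_M(q)$ lives in $\Habiro$ after the division by the unknot Gauss sums. Once these are established, invariance of $I_M(q)$ under the choice of surgery presentation is automatic: $\Habiro$ embeds into $\prod_\xi \BC$ by evaluation at roots of unity, and the values $\tau_M(\xi)$ are already topological invariants, so two elements of $\Habiro$ produced from different Kirby--equivalent surgery diagrams must coincide.
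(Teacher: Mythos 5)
Your strategy is the same one the paper uses (following Habiro): reduce to an algebraically split, $\pm 1$--framed surgery presentation of $M$, substitute the cyclotomic expansion of the colored Jones polynomial, interchange the (finite at each fixed root of unity) sums, and recognize the inner Gauss sums as evaluations of universal elements of $\Habiro$; uniqueness then follows, as you note, from injectivity of evaluation on $\Habiro$. However, you flag the decisive step---the divisibility calibration between the $C_L(\vec k;q)$ and the Gauss sums---as your ``main obstacle'' and do not resolve it, and that step is where all the content of the theorem lives; without it your $I_M(q)$ is not known to make sense in $\Habiro$.

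There are two concrete gaps. First, the Gauss sums must actually be computed. The paper does this via the Laplace transform: Lemma~\ref{l1} shows, using the $q$--binomial theorem and the Sears--Carlitz transformation for terminating ${}_3\phi_2$ series, that the Laplace transform of the cyclotomic basis element $(q^\lambda)_{k+1}(q^{-\lambda})_{k+1}$ under $L_{\mp 1}$ equals $2\cdot(\text{unit in }\Z[q^{\pm1}])\cdot(q^{k+1})_{k+1}$. The needed divisibility is then elementary: since $\qbinom{2k+1}{k}_q=\frac{(q;q)_{2k+1}}{(q;q)_k\,(q;q)_{k+1}}\in\Z[q]$, one has $\frac{(q^{k+1})_{k+1}}{1-q}=\qbinom{2k+1}{k}_q\cdot\frac{1-q^{k+1}}{1-q}\cdot(q;q)_k\in\bigl((q;q)_k\bigr)$, which is exactly the convergence estimate you need. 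Second, your proposed formula divides by $G_\pm(0;q)^{\sigma_\pm}$ inside $\Habiro$, which is not a priori legitimate since $\Habiro$ is not a field and that element is a non--unit; the paper sidesteps this because for $b=\pm1$ the ratio $\gamma_b(\xi)/\gamma_{\mathrm{sn}(b)}(\xi)=1$ cancels exactly, and the surviving normalization factor $\frac{1}{2(1-\xi^{\mp1})}$ is absorbed into the factor of $2$ from the Laplace transform and the factor $\frac{1}{1-q}$ in the final expression $\frac{(q^{k+1})_{k+1}}{1-q}$, yielding a bona fide series in $\Habiro$ rather than a formal quotient.
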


Let us mention the most important consequences of the Habiro's
construction. First of all, each product
$$(q;q)_n:=(1-q)(1-q^2)\dots (1-q^n)$$
is divisible by $(1-q)^n$, hence it is easy to expand every $f(q)\in
\Habiro$ into formal power series in $(q-1)$, denoted by $T(f)$ and
called the Taylor series of $f(q)$ at $q=1$. One important property
of $\Habiro$ is that $f\in \Habiro$ is uniquely determined by its
Taylor series. In other words, the map $T: \Habiro \to \Z[[q-1]]$
is injective. In particular, $\Habiro$ is an integral domain.
Moreover, every $f\in \Habiro$ is
determined by the values of $f$ at any infinite set of roots of
unity of prime power  order. From the existence of $I_M$ one can
derive the following consequences  for integral homology spheres:
\begin{itemize}
\item
The quantum invariants   at all roots of unity are algebraic
integers.

\item The quantum invariants  at any infinite set of roots
 of unity of
prime power  order determine the whole set of quantum invariants.

\item
The  Ohtsuki series (see \cite{Oh, Le2}) have integer coefficients and
 determines the whole set of quantum invariants.

 \item
 The Le--Murakami--Ohtsuki invariant (see \cite{LMO}) totally
 determines the quantum invariants.
\end{itemize}

The integrality of quantum invariants was established earlier only
at roots of unity of prime order (see \cite{MR,Le1}). The integrality
of the Ohtsuki series  was proven by Rozansky, using
quite a different method.

 In \cite{Le;last}, the third author extended  Habiro's results
to rational homology 3--spheres. More precisely,
for a 3--manifold $M$ with $|H_1(M,\Z)|=a$, he constructed
a unified invariant $I_M$ dominating quantum SO(3) invariants
of $M$ at roots unity of order odd and co--prime to $a$.
The Habiro ring was modified by inverting $a$ and all cyclotomic
polynomials not co--prime to $a$.

More precisely,
 for $t:=q^{1/a}$, let $\R_{a,k}$ be a subring
of $\Q(t)$ generated  over $\Z[t^{\pm 1}]$
 by
  $\frac{(t;t)_{k}}{(q;q)_{k}}$. Note that,
 $$ \R_{a,1} \subset \R_{a,2} \subset \dots \subset \R_a \,$$
where $\R_a= \cup_{k=1}^\infty \R_{a,k}$.
Let
$$\widehat \R_a := \lim_{\overleftarrow{\hspace{2mm}n\hspace{2mm}}}
\frac{\R_a}{(q;q)_n}$$ be its cyclotomic completion.
Every element $f\in \widehat\R_a$ has the following presentation:
\be \label{BL}
f=\sum^\infty_{n=0} f_n(t)\; \frac{(1-q^{n+1})(1-q^{n+2})\dots(1-q^{2n+1})}{1-q}\,  ,  \ee
 where $f_n(t)\in \R_a$.
 It was shown in \cite{Le;last} that $I_M\in \widehat\R_a$.

Let $\Gamma_a$ be the set of all elements of $\widehat\R_a$
that have presentation (\ref{BL}) such that $f_n(t)\in \R_{a,2n+1}$.
In \cite{BL}, the first and the third authors  proved that 
$\Gamma_a$ is smaller then $\widehat \R_a$ and that
$I_M\in \Gamma_a$, i.e. the unified invariant 
 has even stronger integrality.


The  
results in \cite{Le;last} and \cite{BL}
 concern only the $SO(3)$ invariant,
for which the order of quantum parameter must be odd. In this paper
we mainly address the case of $SU(2)$ when the order is even. We
construct a unified invariant dominating quantum SO(3) and SU(2)
invariants of rational  homology 3--spheres with 2--torsion only. We show that
our unified invariant splits into a sum of refined unified
invariants dominating spin and cohomological refinements of quantum
SU(2) invariants. New results on integrality of quantum invariants
and the Ohtsuki series are the main applications of our
construction.

\subsection{Results}  Let $\cM_n$ be the set of all
oriented closed compact 3--manifolds $M$ with $H_1(M,\Z)=(\Z/2\Z)^n$. When
$M$ is not an integral homology 3--sphere, the WRT $SU(2)$ invariant,
denoted here by $\tau_M(\xi)$, depends on a 4--th root of $\xi$,
although we will not signify this in our notation. 

\def\cM{\mathcal M}
\def\RP{{\mathbb R}P}

Suppose $M \in \cM_n$. If $\xi$ is an odd root of unity, then
$\tau_M(\xi)=0$, but $\tau^{SO(3)}_M(\xi)\neq 0$. In this case, we
choose $\zeta$  to be the square root of $\xi$, which has the {\it
same order} as $\xi$, and  put
$$\tau'_M(\xi)= \tau^{SO(3)}_M(\xi)/(\tau^{SO(3)}_{\RP^3}(\xi))^n,$$
where $\RP^3$ is the projective space.
\newcommand{\G}{\Gamma}

If  the order of $\xi$ is  {\it even} (then the order of $\zeta$ is
divisible by 4),  define

$$\tau'_M (\xi)= \tau_M(\xi)/(\tau_{\RP^3}(\xi))^n\, .$$

Note that   the quantum invariant $\tau'_M(\xi)$
depends only on a square root of $\xi$.

For $q=v^2$, we define
$\G:=\Z[1/2][v]$.
Let $S=\{n\in \N|n\not\equiv 2 \pmod 4\}$.
The
cyclotomic completion $\G^S$
of the polynomial ring $\G$ with respect to
$S$ was defined by Habiro in \cite{Ha1} (the definition is recalled in
Section \ref{cyc}). 




Suppose  $\xi$ is a root of unity.  Fix a square root $\zeta$ of  $\xi$,
such that if the order of $\xi$ is odd, then the order of $\zeta$ is
the same as that of $\xi$. Then one can evaluate every element $f\in
 \G^S$ by replacing $v$ with $\zeta$; the result is a
complex number, denoted by $\ev_\xi(f)$.

Our first main result is
\begin{Thm}\label{Two}
For every  closed oriented manifold $M\in \cM_n$, there exists a unique
invariant $I_M \in  {\G^S }$, such that for every root $\xi$
of unity and a choice of its square root $\zeta$ as above, one has
 $\ev_\xi(I_M)=\tau'_M(\xi)$.
\end{Thm}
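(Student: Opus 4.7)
My plan is to construct $I_M$ directly from a surgery presentation of $M$, following the template of Habiro \cite{Ha} for integer homology spheres and of Le \cite{Le;last} and \cite{BL} for the $SO(3)$-case of rational homology spheres, and then to track the extra denominators that appear when one also needs to evaluate at even-order roots of unity. First, I represent $M$ as $S^3_L$ for a framed link $L \subset S^3$ obtained as the closure of an $\ell$-component bottom tangle $T$. Since $H_1(M,\Z) = (\Z/2\Z)^n$, the linking matrix of $L$ can be arranged in a convenient normal form in which the $(\Z/2\Z)^n$-contribution is concentrated in a standard block related to surgery on $n$ disjoint $(\pm 2)$-framed unknots (producing connect sums of $\RP^3$), so that the denominator $\tau_{\RP^3}^n$ in the definition of $\tau'_M$ corresponds precisely to the phase coming from that block. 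I would then define $I_M$ by applying Habiro's universal $sl_2$-invariant $J_T$, taking a normalized quantum trace on each strand, and dividing by $\tau_{\RP^3}^n$.

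The central technical point, which I expect to be the main obstacle, is to show that this series actually lies in $\G^S$ and not in some larger completion. Expanding the universal invariant in Habiro's cyclotomic basis turns the trace into a sum of the form $\sum_k c_k(v)\,\sigma_k(v)$, and one must verify that the coefficients $c_k(v)$ lie in $\G = \Z[1/2][v]$ and that the tail has the appropriate $S$-cyclotomic divisibility defining the completion $\G^S$. The necessity of inverting $2$ is forced by Gauss-sum contributions from the $(\pm 2)$-framed unknots; the exclusion of $m \equiv 2 \pmod 4$ from $S$ is then forced by the fact that these Gauss sums vanish exactly at roots of unity of such orders, so leaving them in $S$ would introduce genuine $\Phi_m(v)$-denominators. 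Establishing this divisibility requires a careful extension of the analysis in \cite{Le;last} and \cite{BL} to the even-order case, keeping track of the $2^n$ in the normalization.

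Once $I_M \in \G^S$ has been constructed, invariance under the Kirby moves on $L$ is handled by the same local identities in the universal invariant used by Habiro and by Le, since those identities hold in the universal ring before any specialization. The equality $\ev_\xi(I_M) = \tau'_M(\xi)$ then follows by unwinding the definitions: the specialization $v = \zeta$ turns the universal invariant into the colored Jones sums that define $\tau^{SO(3)}_M(\xi)$ (when the order of $\xi$ is odd) or $\tau_M(\xi)$ (when the order of $\xi$ is even), while the $\tau_{\RP^3}^n$ normalization is built in by design.

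Finally, uniqueness rests on the defining property of Habiro's cyclotomic completion from \cite{Ha1}: the chosen square root $\zeta$ always has order lying in $S$ (odd if $\xi$ has odd order, a multiple of $4$ otherwise), and since $S$ contains infinitely many integers, the total evaluation map on $\G^S$ indexed by roots of unity is injective. Hence any element of $\G^S$ whose evaluations at all $\xi$ agree with $\tau'_M(\xi)$ must coincide with the $I_M$ constructed above.
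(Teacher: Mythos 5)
Your broad strategy (surgery presentation, cyclotomic expansion of the colored Jones polynomial, dividing out the $\RP^3$ contribution, and controlling denominators) matches the paper's, but two steps are genuinely incomplete as written.

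First, the claim that the linking matrix of a surgery presentation of $M$ ``can be arranged in a convenient normal form'' with a block of $(\pm 2)$-framed unknots is false for $M$ itself. The linking pairing on $(\Z/2\Z)^n$ need not be of diagonal type: the generator $E_0^1$ on $\Z/2\times\Z/2$ is an obstruction, and a manifold realizing it cannot be obtained by surgery on an algebraically split link with $\pm 2,\pm 1$ framings. The paper circumvents this by working with $M\#\RP^3$ and invoking the relation $E_0^1\oplus\phi(2)=\phi(2)^{\oplus 3}$ among generators of the linking-pairing semigroup (Lemma~\ref{top}); only after this stabilization is the diagonal normal form available. Without this step your construction of $I_M$ is not even defined for all $M\in\cM_n$, and the normalization $\tau'_M=\tau_M/\tau_{\RP^3}^n$ is no longer visibly accounted for.

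Second, your uniqueness argument — ``$S$ contains infinitely many integers, so the total evaluation map on $\G^S$ is injective'' — does not follow from Habiro's results as stated. Habiro's injectivity criterion (Theorem~6.1 of \cite{Ha1}) requires $S$ to be \emph{connected} in the sense of prime-power adjacency relative to the coefficient ring, and $S=\{n\,:\,n\not\equiv 2\pmod 4\}$ is \emph{not} connected over $\Z[1/2]$: since $2$ is inverted, $\Z[1/2]$ is not $2$-adically separated, so $n$ and $2n$ are no longer adjacent, and $S$ breaks into the infinitely many pieces $S_k=\{2^k(2n+1)\}$. The paper therefore first proves the decomposition $\G^S\cong\prod_{k\neq 1}\G^{S_k}$ (via the coprimality of $\Phi_{2^kn'}$ and $\Phi_{2^lm'}$ in $\Z[1/2][v]$ for $k\neq l$, Proposition~\ref{zzz}(a)), and then applies Habiro's injectivity piecewise (parts (b)–(c)), using evaluation at roots of order $2^kp^e$ for a fixed odd prime $p$. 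Your argument, as written, would not distinguish an element supported in one factor $\G^{S_k}$ from zero; the product decomposition is the missing ingredient.

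A smaller point: the divisibility claim that the tail lies in $\G^S$ is, in the paper, not a soft estimate but a consequence of the explicit Laplace-transform computation $L_{\pm 2;\lambda}(F_k(q^\lambda,q))=$ a unit times $(-v^2;-v)_{2k}$ (Lemma~\ref{l2}), which produces exactly the factors $\prod(1-v^{2i+1})\prod(1+q^i)$ needed for convergence in the $S$-adic topology. Stating that the coefficients ``have the appropriate $S$-cyclotomic divisibility'' is precisely the content one must prove, and in the paper it is a concrete $q$-series identity (Sears–Carlitz plus $q$-Vandermonde), not a formal consequence of the cyclotomic expansion.
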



The ring ${\G^S }$ is a smaller  than $\G_2$  used in \cite{BL},
because of
 the factors $(1+q)(1+q^2)...(1+q^n)$ in the completion,
which are responsible for the spin and cohomological refinements.
 Hence, when restricted to SO(3) invariants, the integrality
stated in Theorem \ref{Two} is stronger than that in 
\cite{BL}.

\begin{Cor}\label{cor-gen}
For $M\in \cM_n$, and the quantum invariants
$\tau'_M$, the following statements hold.

(a)
The  quantum invariants belong to $\Z[1/2][\zeta]$.

(b)
The quantum invariants
are determined by their values at roots of unity $\zeta$ with
${\rm ord(\zeta)}=\{2^k p^e\, | k\in \N, e\in \BN\}$ 
for any odd prime $p\neq 1$.

\end{Cor}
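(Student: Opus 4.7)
The plan is to derive both assertions directly from Theorem~\ref{Two} together with the general injectivity properties of the cyclotomic completion $\G^S$ established by Habiro in \cite{Ha1}.

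For part~(a), Theorem~\ref{Two} gives an invariant $I_M \in \G^S$ satisfying $\tau'_M(\xi) = \ev_\xi(I_M)$. Since $\G = \Z[1/2][v]$, the substitution $v \mapsto \zeta$ sends $\G$ into $\Z[1/2][\zeta]$; it remains only to observe that this evaluation extends continuously to the cyclotomic completion $\G^S$. By our conventions, the order of $\zeta$ is either odd (when $\xi$ has odd order) or a multiple of $4$ (when $\xi$ has even order), so in every case $\mathrm{ord}(\zeta) \in S$; hence the cyclotomic polynomial $\Phi_{\mathrm{ord}(\zeta)}(v)$ is among the factors used to define $\G^S$, and evaluation on the completion is well-defined. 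This shows $\tau'_M(\xi) \in \Z[1/2][\zeta]$.

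For part~(b), by Theorem~\ref{Two} it suffices to prove that $I_M \in \G^S$ is uniquely determined by the subfamily of its evaluations at roots of unity whose associated $\zeta$ has order in
\[
T_p \;:=\; \{\,2^k p^e : k,e \in \N\,\} \cap S.
\]
Since $p$ is odd, every $p^e$ is odd and therefore lies in $S$, so the infinite set $\{p^e : e \geq 1\}$ is contained in $T_p$; in fact $T_p$ also contains every $2^k p^e$ with $k \geq 2$. Habiro's cyclotomic completion theorem from \cite{Ha1} then implies that the product of evaluation maps
\[
\G^S \;\longrightarrow\; \prod_{n \in T_p} \G/(\Phi_n(v))
\]
is injective, so $I_M$ is recoverable from the listed data, and via Theorem~\ref{Two} so is the entire family $\{\tau'_M(\xi)\}$.

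The main point requiring care is the precise invocation of Habiro's injectivity theorem in the present setting. In the integral case (the Habiro ring $\Habiro$), the statement is that values at any infinite family of prime-power roots of unity determine an element; here one needs the analogous statement for the polynomial ring $\Z[1/2][v]$ completed with respect to the saturated set $S$, with the test family $T_p$. The expected route is to verify that $T_p$ meets the cofinality hypothesis of \cite{Ha1} (which it does, by containing arbitrarily large odd prime powers $p^e$), after which the argument is formal.
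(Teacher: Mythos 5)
Your treatment of part (a) matches the paper's: one simply notes that $\mathrm{ord}(\zeta)\in S$ under the stated conventions, so evaluation at $\zeta$ is well-defined on $\G^S$ and lands in $\Z[1/2][\zeta]$.

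Part (b), however, has a genuine gap. You invoke Habiro's injectivity theorem (Theorem 6.1 of \cite{Ha1}) directly for the ring $\G^S=\Z[1/2][v]^S$, but that theorem requires the index set $S$ to be \emph{connected} in Habiro's sense over the coefficient ring. The paper explicitly observes, right before Proposition~\ref{zzz}, that $S=\{n\not\equiv 2\pmod 4\}$ is \emph{not} connected over $R=\Z[1/2]$: since $2$ is invertible in $\Z[1/2]$, the ring is not $2$-adically separated, so $n$ and $2n$ are never adjacent, and $S$ splits into infinitely many components $S_k=\{2^k(2m+1)\}$ graded by the $2$-adic valuation $k$. You single out cofinality of $T_p$ as the delicate point, but cofinality is not the obstruction; connectivity is, and without it Theorem 6.1 simply does not apply. (For instance, evaluations at the odd prime powers $\{p^e\}$ alone certainly do not determine elements of $\G^S$, even though that set already satisfies your cofinality criterion, because they only see the $S_0$-factor.) The paper's route is to first establish the product decomposition $\G^S\cong\prod_{k\neq 1}\G^{S_k}$ (Proposition~\ref{zzz}(a), proved via coprimality of $\Phi_n,\Phi_m$ in $\Z[1/2][v]$ for $n,m$ in different components and the Chinese remainder theorem), then apply Habiro's injectivity componentwise to each connected $S_k$ with test set $\{2^kp^e\}$ (Proposition~\ref{zzz}(b)), and finally assemble these into Proposition~\ref{zzz}(c), from which Corollary~\ref{cor-gen}(b) is immediate. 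Your argument is missing the decomposition step and therefore does not actually deduce the injectivity you claim.
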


Clearly, part (b)  holds also for  $\tau_M$,
however for the part (a) to be true, one may need to multiply $\tau_M$ by
$\zeta^{1/2}$.
\v8

\noindent
{\bf Open problem} We do not know whether these  invariants
are determined by the
 Le--Murakami--Ohtsuki invariant or not.

\subsubsection{Spin  and cohomological refinements} Suppose now the
order of $\xi$ is even, i.e. the order of $\zeta$ is divisible by 4.
 There are refined quantum invariants
$\tau_{M,\sigma}(\xi)$, defined in [1],\cite{KM}, where $\sigma$ is
a spin structure or a cohomological class in $H^1(M,\Z/2\Z)$,
depending on whether the order of $\zeta$ is congruent to $0 \pmod
8$ or $4 \pmod 8$. 

We will renormalize $\tau_{M,\sigma}$ by dividing
by the non--refined invariant of the projective space, i.e.
$\tau'_{M,\sigma}(\xi) : =\tau_{M,\sigma}(\xi)/
(\tau_{\RP^3}(\xi))^n$.
 Then we have
$\tau'_M(\xi)=\sum_\sigma \tau'_{M,\sigma}(\xi)$.

For   $T=\{n\in \N\,|\, n\equiv 0\mod 8\}$,
let $\G^T$ be the cyclotomic completion with respect to $T$ as
defined in
Section \ref{cyc}. If $\zeta$ is a root of unity with order in $T$,
then we can evaluate any element $f\in \G^{T}$ by replacing $v$
with $\zeta$; the result is a complex number denoted by
$\ev_\xi(f)$.

Our next result is
\begin{Thm}\label{main_spin}
For a manifold $M\in \cM_n$ and a spin structure $\sigma$ on $M$,
 there exists a
 unique spin invariant $I_{M,\sigma}(v)
\in \G^{T}$, dominating  spin refinements of quantum invariants
$\tau'_{M,\sigma}(\xi)$ at all roots of unity  $\xi$ whose
order is divisible by 4.

\end{Thm}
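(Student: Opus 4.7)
The plan is to mirror the construction of $I_M$ given in Theorem~\ref{Two}, but to replace every full surgery sum by its subsum over the parity class of colors prescribed by the spin structure $\sigma$. Concretely, present $M \in \cM_n$ as integer surgery on a framed link $L = L_1 \cup \cdots \cup L_m \subset S^3$. The constraint $H_1(M,\Z) = (\Z/2\Z)^n$ allows us to choose $L$ whose linking matrix, reduced modulo $2$, is in a convenient normal form (diagonal after handle slides); in such a presentation the spin structures on $M$ correspond to characteristic vectors $\e = (\e_1, \ldots, \e_m) \in \{0,1\}^m$. By the refined surgery formula of Kirby--Melvin and Blanchet, $\tau'_{M,\sigma}(\xi)$ equals, up to the normalization $(\tau_{\RP^3}(\xi))^{-n}$, the Reshetikhin--Turaev state sum on $L$ in which each color $n_i$ is restricted to the parity $\e_i$.

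\textbf{Key steps.} First, expand the colored Jones polynomial of $L$ via Habiro's cyclotomic expansion as a $\Z[q^{\pm 1}]$-combination of products of shifted quantum integers. Second, perform the spin-restricted Laplace transform component by component, writing each parity-restricted partial sum as the average of two Gauss-type sums, one of which is the unrestricted sum already treated in Theorem~\ref{Two} and the other its twist by $(-1)^{n_i}$. Third, show that the resulting combination, after dividing by $(\tau_{\RP^3})^n$, is a well-defined element of $\G^T$: this amounts to verifying, for every finite subset $T_0 \subset T$, that the expression is congruent modulo the corresponding product of cyclotomic polynomials to a polynomial in $v$ with coefficients in $\Z[1/2]$. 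Fourth, prove independence of the choice of surgery presentation by checking invariance under the spin Kirby moves; this reduces to the invariance of $I_M$ together with elementary parity bookkeeping on the characteristic vector. Finally, uniqueness follows at once from Habiro's injectivity theorem for cyclotomic completions \cite{Ha1}, since $T$ is infinite.

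\textbf{Main obstacle.} The principal difficulty lies in proving integrality in the thinner completion $\G^T$ rather than in the larger ring $\G^S$ of Theorem~\ref{Two}. The unrestricted Laplace transform is divisible by the full product $(q;q)_k$, which is exactly what delivers $\G^S$-integrality; the spin-restricted transform naturally yields only divisibility by the thinner product $(1-q^2)(1-q^4)\cdots$ over the chosen parity class. To promote this to $\G^T$-integrality one must absorb the missing factors $(1+v)(1+v^2)\cdots$, which are supplied by the normalization $(\tau_{\RP^3}(\xi))^n$ together with the factorization $(q;q)_{2k} = (q;q^2)_k (q^2;q^2)_k$, while keeping the final answer manifestly independent of the surgery presentation and compatible with the spin Kirby moves. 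Setting up this algebraic bookkeeping cleanly is the technical core of the proof.
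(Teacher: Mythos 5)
Your high-level strategy—cyclotomic expansion, a spin-restricted Laplace transform written as an average of two Gauss sums, and uniqueness from evaluation at infinitely many roots of unity—matches the paper's. But there are several genuine gaps and one outright inversion.

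First, the relation between $\G^S$ and $\G^T$ in your ``main obstacle'' paragraph is backwards. Since $T\subset S$, the set $\Phi^*_T$ is smaller than $\Phi^*_S$, so \emph{more} polynomials are invertible in $\G^T$ than in $\G^S$; the completion $\G^T$ is the more permissive ring, not the thinner one. This is exactly why $I_{M,\sigma}$ lands in $\G^T$ but not $\G^S$: the function $B_k(x,v)$ appearing in the refined Laplace transform carries denominators $\Phi_i(v)$ with $i\mid 4j+2$ (odd $i$, or $i\equiv2\bmod4$), and such $\Phi_i$ are non-units in $\G^S$ (odd $i\in S$) but units in $\G^T$ because $i$ is not adjacent to any multiple of $8$ over $\Z[1/2]$. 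The checking of membership in $\G^T$ is therefore routine; it is not the crux, and the factorization $(q;q)_{2k}=(q;q^2)_k(q^2;q^2)_k$ does not play the role you assign it.

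Second, your first step is wrong as stated: one cannot in general diagonalize the linking pairing by handle slides. The linking pairing on $(\Z/2)^n$ can contain a summand $E_0^1$ on $\Z/2\times\Z/2$ which is not of diagonal type; only after connect-summing with $\RP^3$ (using the relation $E_0^1\oplus\phi(2)=\phi(2)^{\oplus3}$, see Lemma~\ref{top}) does $M'=M\#\RP^3$ admit an algebraically split surgery presentation. This forces the definition $I_{M,\sigma}:=I_{M',\sigma'}+I_{M',\sigma''}$ summed over the two spin lifts of $\sigma$ to $M'$, a step your outline omits entirely and which cannot be recovered by ``parity bookkeeping.''

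Third, you miss a subtlety in the evaluation statement: the refined Laplace transforms $L^\e_{\pm2}$ must be defined differently according to whether ${\rm ord}(\zeta)\in S_3$ or ${\rm ord}(\zeta)\in T_1$, producing $L^\e_{\pm 2}=\tfrac12\bigl(L_{\pm2}+(-1)^{\e+\chi}L_{\pm2}|_{v\to-v}\bigr)$ with $\chi$ depending on ${\rm ord}(\zeta)\bmod 16$. Consequently $\ev_\xi(I_{M,\sigma})$ equals $\tau'_{M,\sigma}(\xi)$ for ${\rm ord}(\zeta)\in T_1$ but $\tau'_{M,\sigma+h}(\xi)$ for ${\rm ord}(\zeta)\in S_3$, where $h$ is the class assigning $1$ to all $H_1$-generators (Proposition~\ref{ququ}). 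This relabeling is part of what ``dominating'' means in the theorem statement; a proof that claims $\ev_\xi(I_{M,\sigma})=\tau'_{M,\sigma}(\xi)$ uniformly would be incorrect.

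Finally, your fourth step (invariance under spin Kirby moves) is unnecessary and would be considerable extra work: once one knows that $I_{M,\sigma}\in\G^T$ reproduces the topological invariants $\tau'_{M,\sigma}$ (up to the $h$-shift) at the roots of unity of orders in $\cT_k$, the injectivity results of Proposition~\ref{zzz}(a),(b) give uniqueness and hence presentation-independence for free.
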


A similar statement concerning cohomological refinements is given
in Theorem \ref{main-coho}, where the unified invariant is an element
of $\G^{S_2}$ with $S_2=\{4(2n+1)\,|\, n\in \N\}$.

\begin{Cor}
Suppose $M\in M_n$.

(a) The
set of spin invariants $\tau_{M,\sigma}(\xi)$ is 
determined by their values at roots of unity
$\xi$ with ${\rm ord}(\xi)=\{ 2^k p^e\, | \,k\geq 2, e\in \N\}$
where $p\neq 1$ is an odd prime.

(b) The set of cohomological refinements
$\tau_{M,\sigma}(\xi)$ is determined
by their values at roots of unity
$\xi$ with ${\rm ord}(\xi)=\{ 2 p^e\, | \, e\in \N\}$
where $p\neq 1$ is an odd prime.
\end{Cor}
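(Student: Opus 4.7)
The plan is to deduce both parts from the refined unified invariant theorems (Theorem \ref{main_spin} for (a) and Theorem \ref{main-coho} for (b)) combined with Habiro's injectivity principle for cyclotomic completions \cite{Ha1}. In both cases $\tau'_{M,\sigma}(\xi)$ is an evaluation of a single element of a cyclotomic completion $\G^\bullet$, and the prescribed family of orders will be shown to be rich enough to pin down that element.

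First I would check that the prescribed evaluations are legitimate. For (a), if ${\rm ord}(\xi)=2^kp^e$ with $k\geq 2$, the chosen square root $\zeta$ has order $2^{k+1}p^e$, divisible by $8$, so ${\rm ord}(\zeta)\in T$ and $\tau'_{M,\sigma}(\xi)=\ev_\xi(I_{M,\sigma})$ makes sense; for (b), ${\rm ord}(\xi)=2p^e$ gives ${\rm ord}(\zeta)=4p^e\in S_2$. Since $\tau'_{M,\sigma}(\xi)=\tau_{M,\sigma}(\xi)/(\tau_{\RP^3}(\xi))^n$ with a non--vanishing and explicitly computable denominator, knowing the family $\{\tau_{M,\sigma}(\xi)\}$ is equivalent to knowing $\{\tau'_{M,\sigma}(\xi)\}$.

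Next I would invoke the injectivity result from \cite{Ha1}: for a saturated set $S\subset\N$ and a coefficient ring in which the relevant primes are invertible (here $\Z[1/2]$, in which every odd prime is invertible), an element of the cyclotomic completion $\G^S$ is uniquely determined by its evaluations at any infinite family of roots of unity whose orders form a set $\{dp^e : e\in \N\}\subset S$ for a fixed $d\in S$ and a fixed odd prime $p$. For (a) I take $S=T$, $d=8$, which is the subfamily coming from $k=2$ in the Corollary; for (b) I take $S=S_2$, $d=4$, which is exactly the family in the statement. In either case the unified invariant $I_{M,\sigma}$, and hence all values $\tau_{M,\sigma}(\xi)$, are pinned down.

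The main obstacle will be to apply Habiro's injectivity statement verbatim to $\G=\Z[1/2][v]$: one needs the cyclotomic polynomials $\Phi_n(v)$ for $n\in\{dp^e\}$ to generate pairwise coprime ideals in $\G$ (automatic once $2$ is inverted, since their mutual resultants are powers of the involved primes) and the cyclotomic completion to be separated along these ideals (built into the construction recalled in Section \ref{cyc}). Once these are verified the Corollary follows formally.
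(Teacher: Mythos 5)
Your general strategy — go through the unified invariants $I_{M,\sigma}$ of Theorems \ref{main_spin} and \ref{main-coho} and then apply Habiro's injectivity result — is the right one, and the reduction from $\tau_{M,\sigma}$ to $\tau'_{M,\sigma}$ and the check that $\zeta$ has order in $T$ or in $S_2$ are both fine. Part (b) of your argument is essentially correct: $S_2=\{4(2n+1)\}$ is connected over $\Z[1/2]$, $4\in S_2$, and $\cT_2=\{4p^e\}$ are the orders of the $\zeta$'s, so Proposition \ref{zzz}(b) applies directly.

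There is, however, a genuine gap in part (a). You take $S=T$ and $d=8$, claiming that the subfamily with $k=2$ already pins down $I_{M,\sigma}\in \G^T$. But $T=\bigcup_{k\geq 3}S_k$ is \emph{not} connected for the ring $\Z[1/2][v]$: once $2$ is inverted, $2^kp^e$ and $2^lp^e$ with $k\neq l$ cease to be adjacent, so Habiro's Theorem 6.1 cannot be applied to $T$ as a whole. What actually happens is Proposition \ref{zzz}(a): $\G^T=\prod_{k\geq 3}\G^{S_k}$, a genuine direct product. Evaluating only at $\zeta$ with ${\rm ord}(\zeta)\in\cT_3=\{8p^e\}$ (i.e.\ $k=2$ in your indexing) determines the $\G^{S_3}$ factor and says nothing about the $\G^{S_k}$ factors for $k>3$. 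This is why the statement of the Corollary insists on the full family $\{2^kp^e : k\geq 2\}$, not just $k=2$: one needs orders of $\zeta$ running through $\cT_m$ for \emph{every} $m\geq 3$, and then Proposition \ref{zzz}(b) must be applied separately to each factor $\G^{S_m}$.

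The final paragraph also has the logic inverted. You assert that pairwise coprimeness of the ideals $(\Phi_n(v))$ for $n\in\{dp^e\}$ is what makes the evaluation injective, and that this coprimeness is automatic once $2$ is inverted because resultants are powers of the involved primes. Both claims are wrong. First, if the $(\Phi_n)$ for $n\in\{dp^e\}$ were pairwise coprime in $\G$, the Chinese Remainder Theorem would split $\G/(\prod\Phi_n)$ into a product, and the evaluations at $\cT$ would put \emph{no} constraint whatsoever on the residues modulo $\Phi_m$ for $m\in S\setminus\cT$ — injectivity would fail rather than hold. The mechanism behind Habiro's Theorem 6.1 is the opposite: adjacency of $n$ and $n'=np^{\pm 1}$ means $(\Phi_n)+(\Phi_{n'})$ is a power of $p$, not the unit ideal, and because $\Z[1/2]$ is $p$-adically separated for odd $p$ these congruences accumulate to pin down the element. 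Second, for $n=dp^e$ and $n'=dp^{e'}$ the resultant of $\Phi_n$ and $\Phi_{n'}$ is a power of $p$, not of $2$, so inverting $2$ does nothing; and $\Z[1/2]$ certainly does not have all odd primes invertible. The coprimeness/CRT argument is the right tool for the decomposition of $\G^T$ (Proposition \ref{zzz}(a)), not for injectivity.

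So the correct proof of (a) has two steps rather than one: first use Proposition \ref{zzz}(a) to split $\G^T$ as $\prod_{k\geq 3}\G^{S_k}$, then for each $m\geq 3$ use Proposition \ref{zzz}(b) with the family $\cT_m=\{2^mp^e\}$ coming from the orders $2^{m-1}p^e$ of $\xi$. That is exactly what the full range $k\geq 2$ in the Corollary provides.
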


\v8

\subsection{The case $|H_1|=2$} Suppose  $M\in\cM_1$.
Let us consider the ring
$$\ZZ := \lim_{\overleftarrow{\hspace{2mm}n\hspace{2mm}}}
\frac{\Z[v^{\pm 1}]}{(-v^2;-v)_{2n}},$$
where
$$ (-v^2;-v)_{2n} := \prod_{i=2}^{2n+1} (1+(-v)^i)= (1-v^3)(1-v^5)\dots (1-v^{2n+1})
(1+q)(1+q^2) \dots (1+q^{n}).$$
Every $f(v) \in \ZZ$ can be written as, with $f_n(v) \in \Z[v^{\pm
1}]$,
$$f(v) = \sum_{n=0}^\infty f_n(v) \, (-v^2;-v)_{2n},$$
If
$\zeta$ is a root of unity of order either odd or
divisible by 4, then the evaluation
$\ev_\xi(f(v))=f(v)|_{v=\zeta}$ is well--defined.
For every
root $\xi$ of unity, one can choose a square root $\zeta$ of $\xi$
whose order is either odd or divisible by 4.

Let
$$\ZZZ := \lim_{\overleftarrow{\hspace{2mm}n\hspace{2mm}}}
\frac{\Z[v]}{(1+q)(1+q^2)\dots(1+q^n)}.$$ If $\zeta$ is a root of
unity of order divisible by $4$, then $f(\zeta)$ is well--defined
for $f(v)\in \ZZZ$.

\begin{Thm}\label{cor1}
For every  3--manifold $M\in \cM_1$, there exists a unique
invariant $I_M(v) \in \ZZ$,
such that for any root of unity $\xi$,
  $\ev_\xi(I_M(v))=\tau'_M(\xi)$. Moreover,
$I_M(v)=\sum_\sigma I_{M,\sigma}(v)$ where  $\sigma$ is a spin
structure on $M$, and
$I_{M,\sigma}(v)\in \frac{1}{1-v}\ZZZ$  dominates
refined quantum invariants  $\tau_{M,\sigma}(\xi)$.

\end{Thm}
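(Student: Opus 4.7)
The plan is to deduce the theorem from Theorems \ref{Two} and \ref{main_spin} by sharpening the ambient cyclotomic completion in the special case $n=1$. First I would verify the ring inclusions $\ZZ \hookrightarrow \G^S$ and $\ZZZ \hookrightarrow \G^T$: the factor $(-v^2;-v)_{2n}=\prod_{i=2}^{2n+1}(1+(-v)^i)$ vanishes exactly at roots of unity of odd order at most $2n+1$ and of order divisible by $4$ at most $4n$, which matches the set $S=\{n\not\equiv 2 \pmod 4\}$; analogously $\prod_{k=1}^n(1+q^k)$ vanishes at orders divisible by $4$, matching $T$ after the square-root correspondence $\xi\leftrightarrow\zeta$. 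Consequently Theorems \ref{Two} and \ref{main_spin} already produce unified invariants in the larger rings $\G^S$ and $\G^T$ whose evaluations at $\xi$ yield $\tau'_M(\xi)$ and $\tau'_{M,\sigma}(\xi)$; what remains is to upgrade the coefficient ring from $\Z[1/2]$ to $\Z$ and to reorganize the expansion into the claimed basis.

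For this, I would revisit the surgery-theoretic construction of $I_M$ underlying Theorem \ref{Two}: it is assembled as the universal $sl_2$ invariant of a surgery link evaluated on a Habiro cyclotomic basis, divided by $(\tau_{\RP^3}(\xi))^n$. For $n=1$ the single $\tau_{\RP^3}$ in the denominator contributes exactly one factor of $2$ and absorbs the sole $1/2$ appearing in the universal Habiro expansion, leaving coefficients in $\Z[v^{\pm 1}]$. Rewriting the resulting series in the truncating basis $(-v^2;-v)_{2n}$ in place of the one used for $\G^S$ is a change of basis inside the cyclotomic completion, valid because the two towers of principal ideals are cofinal up to units in $\ZZ$; this yields $I_M\in\ZZ$. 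The spin version runs in parallel: the same $n=1$ integrality analysis applied to Theorem \ref{main_spin} produces $\Z[v^{\pm 1}]$-coefficients, with a single factor of $(1-v)^{-1}$ arising from the $\tau_{\RP^3}$ normalization at order-$4$ roots of unity, so that $(1-v)I_{M,\sigma}(v)\in \ZZZ$.

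Finally, the decomposition $I_M(v)=\sum_\sigma I_{M,\sigma}(v)$ follows from the identity $\tau'_M(\xi)=\sum_\sigma \tau'_{M,\sigma}(\xi)$ at all $\xi$ of even order combined with injectivity of the evaluation map on $\ZZ$. This injectivity and the uniqueness of $I_M$ are standard properties of Habiro-type cyclotomic completion rings, following from the fact that values at roots of unity of prime-power order determine an element. The main obstacle is the integrality step above: carefully tracking the interaction between the $1/2$ denominators produced by Habiro's universal expansion and the normalizing factor $(\tau_{\RP^3})^n$ when $n=1$, and verifying that the resulting integral series reorganizes cleanly into the specific cyclotomic basis $(-v^2;-v)_{2n}$ rather than some coarser truncation inside $\G^S$.
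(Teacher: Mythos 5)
Your plan correctly identifies the target — specialize to $n=1$ and upgrade the coefficient ring from $\Z[1/2]$ to $\Z$ — but you flag "carefully tracking the interaction between the $1/2$ denominators and the normalizing factor" as the main obstacle and then leave it unresolved, and this is precisely the substance of the theorem. The paper does not obtain integrality by a formal "change of basis inside the cyclotomic completion" (that argument cannot produce $\Z$-coefficients from $\Z[1/2]$-coefficients: being in the image of $\Z[v]^S\hookrightarrow\Z[1/2][v]^S$ is exactly the integrality one must prove). Instead it exhibits an explicit formula. Two ingredients you omit are doing the real work. First, a topological one: unlike the general $n\geq 2$ case, every $M\in\cM_1$ is obtained by surgery on an algebraically split link directly, so one never passes to $M\#\RP^3$ and the prefactor in the general formula of Section \ref{gener} is $(1+v)$ rather than $(1+v)^{n+1}$. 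Second, a computational one: with exactly one $\pm 2$-framed component, the Laplace transform evaluation from Lemma \ref{l2} together with the identity $(1-v)(-v^2;-v)_{2k}/(q^{k+1})_{k+1}=B_k(v)$ cancels the $(1+v)$ against the $1/(1-q)$ exactly, giving the formula of Proposition \ref{h2} with coefficients manifestly in $\Z[v^{\pm1}]$; the spin statement is the corresponding observation after Proposition \ref{ququ}, where the remaining denominator is a single $1-v$.

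Two smaller inaccuracies in your sketch: the factor $\tau_{\RP^3}(\xi)=\zeta^{-1/2}\sqrt2/(1+\zeta^{-1})$ contributes a $\sqrt2$, not a $2$, so "contributes exactly one factor of 2" is not literally how the cancellation happens — the $2$'s come from the Laplace transform itself (both sides of Lemma \ref{l2} carry a factor $2$) and cancel against the $\frac{1}{2(1-\xi^{-sn(b)})}$ in Theorem \ref{le}. Also, $\ZZZ\cong\Z[v]^{\{4m\}}$ while $\G^T$ is taken over $\{8m\}$, so the inclusion you assert goes from a completion at a larger index set to a smaller one; this is a restriction map, not an obvious embedding, and in any case the lifting of $I_{M,\sigma}$ to $\frac{1}{1-v}\ZZZ$ is again an integrality claim that must be read off the explicit formula \eqref{invar}, not inferred from comparing the ambient rings.
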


 The ring $\ZZ$ is smaller than the ring $ {\G^S }$ of
Theorem \ref{Two}.
 By results of \cite{Ha1},   $\ZZ$
embeds in $\Z[[v-1]]$, via Taylor series (compare Proposition \ref{zz}
below). As a consequence, we will prove
\begin{Cor} \label{co2}
For $M\in \cM_1$ and the quantum invariants $\tau'_M$, the following
statements hold.

(a) The quantum invariants
at all roots  of
unity are algebraic integers.

(b) The quantum  invariants at any infinite set  of roots
of unity of odd
prime power  order determine the whole set of
quantum invariants.

(c) The
 Ohtsuki series of $M$, a formal power series in $q-1$, has
coefficients in $\Z[1/2]$.
If $\zeta$ is a root of
 unity of order  $p^d$ with $p$ an odd prime, then the Ohtsuki
 series at $\zeta$ converges $p$--adically to the quantum invariant at
 $\zeta$.

 (d)
 The Le--Murakami--Ohtsuki  invariant determines
the quantum invariants
 at all roots of unity.
\end{Cor}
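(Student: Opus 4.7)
The plan is to derive all four parts of Corollary~\ref{co2} from Theorem~\ref{cor1}, which provides a unified invariant $I_M\in\ZZ$ with $\ev_\xi(I_M)=\tau'_M(\xi)$, combined with the structural properties of $\ZZ$ recorded in (or leading up to) Proposition~\ref{zz}: the embedding $\ZZ\hookrightarrow \Z[[v-1]]$ via Taylor expansion, and the Habiro-style detection principles of \cite{Ha1}.

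Part (a) is immediate from the explicit presentation $I_M(v)=\sum_{n\ge 0} f_n(v)\,(-v^2;-v)_{2n}$ with $f_n\in\Z[v^{\pm 1}]$: at $v=\zeta$ all but finitely many summands vanish, so $\tau'_M(\xi)=\ev_\xi(I_M)$ is a finite $\Z$-linear combination of powers of $\zeta$, hence an algebraic integer. For part (c), I would Taylor-expand $I_M(v)=\sum_k a_k(v-1)^k\in\Z[[v-1]]$ and substitute $v=\sqrt{q}$; writing $v-1=(q-1)/(v+1)$ and expanding $(v+1)^{-1}$ as a power series in $v-1$ over $\Z[1/2]$ yields the Ohtsuki series in $\Z[1/2][[q-1]]$. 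The $p$-adic convergence at a root $\zeta$ of order $p^d$ with $p$ odd follows from bounding the tail $\sum_{n>N} f_n(\zeta)\,(-\zeta^2;-\zeta)_{2n}$ in $\Z_p[\zeta]$: only the factors $(1-\zeta^i)$ with $p^d\mid i$ fail to be units, and accumulating such factors forces the $p$-adic valuation of the tail to infinity.

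Part (b) is the serious step. The approach is to invoke Habiro's prime-power evaluation principle from \cite{Ha1}: for any odd prime $p$, $\ZZ$ embeds into a $p$-typical completion in which an element is determined by its values on any infinite set of roots of unity of order $p^e$. Applied to $I_M$, and noting that the distinguished square root $\zeta$ of an odd-order $\xi$ is itself of odd order (so lies in the allowed evaluation locus for $\ZZ$), this yields (b). Part (d) is then essentially formal: the LMO invariant determines the Ohtsuki series through its $sl_2$-weight system \cite{LMO, Oh}, the Ohtsuki series $p$-adically determines $\tau'_M$ at the roots of order $p^d$ by (c), and those values determine the full collection by (b).

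The main obstacle will be adapting Habiro's detection principle to the specific ring $\ZZ$, which contains the even-cyclotomic factors $(1+q^i)$ and a half-integer power of $q$ not present in Habiro's original setup. I expect this to be handled by restricting to roots of odd prime-power order, where the even factors become $p$-adic units and the relevant quotients of $\ZZ$ become cofinal in Habiro's completion, so that the argument of \cite{Ha1} transfers essentially unchanged.
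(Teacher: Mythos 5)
Your proposal is correct and tracks the paper's own proof quite closely: part (a) via evaluation of elements of $\ZZ$, part (b) via the Habiro-type detection principle (which the paper has already packaged as Proposition~\ref{zz}(b), applied with $k=0$ so that $\cT_0=\{p^e\}$), part (c) via the Taylor expansion of $I_M$ converted from $(v-1)$ to $(q-1)$, and part (d) via the LMO/Ohtsuki-series connection. Two minor remarks: for the conversion in (c) the paper uses the explicit binomial series $v-1=(1+(q-1))^{1/2}-1=\sum_{n\ge1}\binom{1/2}{n}(q-1)^n$ rather than your implicit recursion via $v-1=(q-1)/(v+1)$ — both are fine, the paper's is more directly a power series; and for (d) the cleanest chain is Ohtsuki series $\Rightarrow I_M$ (by injectivity of the Taylor map, Proposition~\ref{zz}(a)) $\Rightarrow$ all quantum invariants, which avoids passing through $p$-adic convergence and (b). Your worry about adapting Habiro's detection to the nonstandard ring $\ZZ$ is exactly what Proposition~\ref{zz} is there to settle, so that step is not an obstacle but simply a citation.
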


The integrality of $\tau_{M,\sigma}$ for $\Z/p\Z$--homology
spheres at roots of order  $2p$, where $p$ is an odd {\em prime}
and
 $\sigma$ is  a cohomological class,  was studied
by Murakami in \cite{HM, HM1}.
\v8

\noindent
{\bf Example.} Suppose $M$ is obtained by surgery on the figure 8
knot with framing 2.
Then
$$ I_M(v) = \sum^\infty_{n=0} v^{-n(n+2)}(-v^2;-v)_{2n}$$

\noindent
{\it a) Spin refinement.}
Let $\sigma_0$ be the characteristic spin structure on $M$,
 $\sigma_1$ the other one.
$$ I_{M,\sigma_\e}(v)=\frac{1}{2(1-v)}
 \sum^\infty_{n=0} v^{-n(n+2)}\prod^n_{i=1}(1+q^i)
\left[\prod^n_{i=0} (1-v^{2i+1})- (-1)^{\e+n}
\prod^n_{i=0}(1+v^{2i+1})\right]
$$
Suppose that  ${\rm ord}(\zeta) \equiv 0 \pmod 8$, and
${\rm ord}(\zeta)/8\equiv\chi\pmod 2$, then $\ev_\xi(I_{M,\sigma_\e}(v))=
\tau_{M,\sigma_{\e+\chi}}(\zeta)$.
\v8

\noindent
{\it b) Cohomological refinement.}
Let  $\sigma_\e\in H^1(M,\Z/2\Z)$, and $\sigma_1$ be trivial.
$$ I_{M,\sigma_\e}(v)=\frac{1}{2(1-v)}
 \sum^\infty_{n=0} v^{-n(n+2)}\prod^n_{i=1}(1+q^i)
\left[\prod^n_{i=0} (1-v^{2i+1})+
(-1)^{\e+n} I\prod^n_{i=0}(1+v^{2i+1})\right] $$
Assume that ${\rm ord}(\zeta)=4k$  with odd $k$ and
$\zeta^{k^2}=(-1)^\chi I$, where $I$ is the unit complex number,
 then  $\ev_{\xi}(I_{M,\sigma_{\e}}(v))=
\tau_{M,\sigma_{\e+\chi}}(\zeta)$.

\subsection{Plan of the paper}
The paper is organized as follows. After recalling the definitions,
we collect the results on cyclotomic completions
of polynomial rings in Section 2.
Then we introduce the Laplace transform method for constructing
unified invariants.
Applying this method  to integral homology 3--spheres,
we get precise formulas for  Habiro's
unified invariants.
After that we apply this method to $M\in \cM_n$.
Here again the exact formula for the  Laplace transform
 implies  various above mentioned results.
In Section 6, we construct
   spin and cohomological
refinements of the unified invariant.

\subsection*{Acknowledgment} The first author wishes to express her
gratitude to Dennis Stanton for the significant simplification of the proofs
of Lemmas \ref{l1}, \ref{l2}.

\section{The colored Jones polynomial and the WRT invariant}
\label{defs} Let us first fix the notation. Throughout the paper,
$q=v^2$.
$$ \{n\} := v^n-v^{-n},
 \quad  \{n\}! \, :=
\prod_{i=1}^n \{i\} ,\quad  [n] :=\frac{\{n\}}{\{1\}}, \quad
\qbinom{n}{k} := \frac{\{n\}!}{\{k\}!\{n-k\}!}.$$ Let
$(a;t)_k:=(1-a)(1-at)\dots(1-at^{k-1})$ and for simplicity  $(q)_n:=(q;q)_n$.

\subsection{The colored Jones polynomial}

\newcommand{\RR} {\mathbf R}

 Suppose $L$ is a framed, oriented link
in $S^3$ with $m$ ordered components.
 For every positive integer $n$ there is a unique
irreducible $sl_2$--module $V_n$ of dimension $n$.
For positive integers $n_1,\dots,n_m$ one can define
the quantum invariant $J_L(n_1,\dots,n_m):=
J_L(V_{n_1},\dots,V_{n_m})$ known as the colored
Jones polynomial of $L$ (see e.g. \cite{Tu}).
Let us recall here a few well--known formulas.
For the unknot $U$ with 0 framing one has
\begin{equation} J_U(n) = [n]= \{n\}/\{1\}. \label{unknot}
\end{equation}
 If $L'$ is obtained from $L$
by increasing the framing of the $i$--th component by 1, then
\begin{equation}\label{framing}
J_{L'}(n_1,\dots,n_m) = q^{(n_i^2-1)/4} J_{L}(n_1,\dots,n_m).
\end{equation}
In general, $J_{L}(n_1,\dots,n_m) \in \BZ[q^{\pm 1/4}]$. However,
there is a number $a\in \{0,\frac{1}{4},\frac{1}{2},\frac{3}{4}\}$
such that $J_{L}(n_1,\dots,n_m) \in q^a\BZ[q^{\pm 1}]$.

\subsection{Evaluation map and Gauss sum}
Throughout this paper, let $\xi$ be  a primitive root of unity of
order $r$. We first define, for each $\xi$, the evaluation map
$\ev_\xi$, which replaces $q$ by $\xi$. Suppose that $r$  is odd,
then there exists an integer $2^*$, unique modulo $r$, such that
$(\xi^{2^*})^{2}=\xi$. For $f\in \BQ[v^{\pm 1}]$, we define
$$\ev_\xi f := f|_{v= \xi^{2^*}}.$$
If $r$ is even, then $\ev_\xi$ {\it depends
 on  a square root $\zeta$ of $\xi$}. We define
$$\ev_\xi f := f|_{v= \zeta}.$$

Suppose
  $f(v;n_1,\dots,n_m)$ is a function
of variables $v$ and integers $n_1,\dots,n_m$. Let
$$ {\sum_{n_i}}^\xi f := \sum_{n_i} \ev_\xi (f),$$
where in the sum  all the $n_i$ run the set of  numbers between $0$
and $2r-1$. Moreover, we define
$$ {\sum_{n_i}}^{\xi,\e} f := \sum_{n_i\equiv\e\pmod2} \ev_\xi (f),$$
where for $\e=0$ (resp. $\e=1$),
 the $n_i$ in the sum run the set of even (resp. odd)  numbers
between $0$ and $2r-1$.

 Variations of the Gauss sum are defined by
the following formulas. Fix a 4--th root of $\xi$. For $b\in \Z$ let
$$ \gamma_b(\xi):= {\sum_{n}}^\xi q^{b\frac{n^2-1}{4}}\, ,\;\;\;\;\;\;
 \gamma^\e_b(\xi):= {\sum_{n}}^{\xi,\e} q^{b\frac{n^2-1}{4}}\, .$$

Furthermore,
$$ F_L(\xi):= {\sum_{n_i}}^\xi
J_L(n_1,\dots,n_m)\prod_{i=1}^m [n_i]\, .$$
For any sequence $c=(c_1,...,c_m)\in (\Z/2\Z)^m$ we define
$$ F^c_L(\xi):= {\sum_{n_i}}^{\xi,c_i+1}
J_L(n_1,\dots,n_m)\prod_{i=1}^m [n_i]\, .$$
For $\e=0$ or $1$, let $F^\e_L(\xi):=F^{(\e,\e,...,\e)}_L(\xi)$.

\subsection{Quantum (WRT) invariants and their refinements}\label{def}
All 3--manifolds in this paper are supposed to be compact, closed and
oriented. Every link in a 3--manifold is framed, oriented, and has
components ordered.

 Suppose $M$ is an
oriented 3--manifold obtained from $S^3$ by surgery along a framed,
oriented link $L$. (Note that $M$ does not depend on the orientation
of $L$). Let $\sigma_+ $ (respectively, $\sigma_-$) be the number of
positive (resp. negative) eigenvalues of the linking matrix of $L$.
Suppose $\xi$ is a root of unity of  order $r$ together with a fixed
4--th root of it. Then the WRT (or quantum) $SU(2)$ invariant
\cite{Tu} is defined by
\begin{equation*}
\tau_M(\xi) =
\frac{F_L(\xi)}{(F_{U^{+}}(\xi))^{\sigma_+}\,
(F_{U^{-}}(\xi))^{\sigma_-} }.
\end{equation*}
For connected sum, one has $ \tau_{M\#N}(\xi) =\tau_{M}(\xi)
\tau_N(\xi).$

Suppose $\xi$ is a root of unity of odd order $r$. Then the quantum
$SO(3)$ invariant \cite{KM} is defined by
\begin{equation*}
\tau_M^{SO(3)}(\xi) :=
\frac{F^1_L(\xi)}{(F^1_{U^{+}}(\xi))^{\sigma_+}\,
(F^1_{U^{-}}(\xi))^{\sigma_-} }.
\end{equation*}

Let $L_{ij}$ be the $(i,j)$--entry of the linking matrix of $L$.
Any solution $c=(c_1,...,c_m)$
of the characteristic equation $L_{ij} c_j=L_{ii} \pmod 2$ defines
a spin structure $\sigma_c$ on $M$ \cite{KM}.
If the order of $\xi$ is divisible by 4, then there exists an invariant
of the pair $(M,\sigma_c)$ defined as follows.

\begin{equation}\label{dspin}
\tau_{M,\sigma_c}(\xi) =
\frac{F^c_L(\xi)}{(F_{U^{+}}(\xi))^{\sigma_+}\,
(F_{U^{-}}(\xi))^{\sigma_-} }.
\end{equation}

If the order of $\xi$ is $2\pmod 4$, then (\ref{dspin})
defines an invariant
of the pair $(M,\sigma_c)$, where $\sigma_c\in H^1(M,\Z)$
is determined by the solution $c$ of the following
equation  $L_{ij} c_j=0 \pmod 2$. Clearly,
$\tau_M(\xi)=\sum_\sigma \tau_{M,\sigma}(\xi)\,$ .

\subsection{Habiro's cyclotomic expansion of the colored Jones
polynomial}

For a link $L$ with $m$ components, define

$$ J'_L(n_1,\dots,n_m) := \frac{J_L(n_1,\dots,n_m)}{[n_1] \dots
[n_m]}.$$

Let $K$ be a knot with framing zero.
 Note that $J'_K(\l)\in \Z[q^{\pm 1}]$ for integer $\lambda \ge 1$.
In \cite{Ha}, Habiro proved that there exist $C_{K,k}(q)\in
\Z[q^{\pm 1}]$ such that \be\label{hab}
 J'_K(\l)=\sum^{\infty}_{k=0} C_{K,k}(q)\, (q^{1+\l})_k (q^{1-\l})_k\, .\ee
 The sum in (\ref{hab})
is finite, because the summands with $k\geq \l$ are zero. This
expansion is called the cyclotomic expansion of the colored Jones
polynomial. The non--trivial part here is that $C_{K,k}$'s are Laurent
polynomials in $q$ with integer coefficients.
\v8

\noindent {\bf Examples.}
Let $3_1$, $\bar 3_1$ and $4_1$ denote
the right--, left--handed trefoil and
the figure 8 knot.   We have
$$J'_{3_1}(\l)=\sum^{\infty}_{k=0} q^{-{k(k+2)}}(q^{1+\l})_k (q^{1-\l})_k\, ,\;\; \,
\;\;\;\;
J'_{\bar 3_1}(\l)=\sum^{\infty}_{k=0} q^{k}(q^{1+\l})_k (q^{1-\l})_k \, ,
$$
$$J'_{4_1}(\l)=\sum^{\infty}_{k=0} (-1)^k q^{-\frac{k(k+1)}{2}}(q^{1+\l})_k (q^{1-\l})_k \,
.$$
\noindent
{\bf Note.} The coefficients $C_{K,k}$ are computed for all twist knots
in \cite{Ma}.

More generally, we have the following.
\begin{pro}(Habiro) \label{link}
Let $L$ be an algebraically split link of $m$ components.
There exist $C_{L,\fk}(v)\in \Z[v^{\pm 1}]$ with $\fk=(k_1,\dots,k_m)$,
such that
$$J'_L(n_1,\dots,n_m)=\sum_{k\geq0}   \left(\sum_{\max k_i=k} C_{L,\fk}(v) \, (1-q)^l  \prod^{l}_{i=1}
\frac{(q^{1+n_i})_{k_i}(q^{1-n_i})_{k_i}}{(q^{k_i+1})_{k_i+1}}\right)
\frac{(q^{k+1})_{k+1}}{ (1-q)}\, .
$$
\end{pro}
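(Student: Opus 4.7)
The plan is to deduce the statement from Habiro's integrality theorem for the universal $sl_2$ invariant of an algebraically split bottom tangle. First, I would present $L$ as the plat closure of an algebraically split bottom tangle $T$ with $m$ components and study its universal $sl_2$ invariant $J_T$, which a priori lives in a suitable completion of $U_q(sl_2)^{\otimes m}$. Evaluating the $i$-th tensor factor on the irreducible module $V_{n_i}$ and taking quantum traces recovers $J_L(n_1,\dots,n_m)$; dividing by $\prod_i[n_i]$ then yields $J'_L$.

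The key input is Habiro's result that, because $\mathrm{lk}(L_i,L_j)=0$ for all $i\neq j$, the invariant $J_T$ actually lies in the completed tensor power $\widehat U^{\otimes m}$, where $\widehat U$ is the cyclotomic completion of the even central part of the integral form of $U_q(sl_2)$. This completion admits a topological $\Z[v^{\pm 1}]$-basis of Jacobi--Habiro central elements $\{\sigma_k\}_{k\ge 0}$, each of which acts on $V_n$ by a scalar proportional to $(q^{1+n})_k(q^{1-n})_k$ together with a normalizing denominator of the form $(q^{k+1})_{k+1}$. Writing the unique expansion
$$J_T=\sum_{\fk}C_{L,\fk}(v)\,\sigma_{k_1}\otimes\cdots\otimes\sigma_{k_m},$$
with coefficients forced into $\Z[v^{\pm 1}]$ by Habiro's integrality theorem, and then evaluating on $V_{n_1}\otimes\cdots\otimes V_{n_m}$ and dividing by $\prod_i[n_i]$, produces a cyclotomic expansion of $J'_L$ of the claimed shape. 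The stratification by $k=\max_i k_i$ with the outer factor $(q^{k+1})_{k+1}/(1-q)$, balanced by the overall $(1-q)^m$ inside the sum, is a purely algebraic repackaging: one absorbs the largest of the $m$ normalizing denominators into a common outside factor and records the remaining normalization as extra powers of $(1-q)$, which does not disturb integrality of the coefficients.

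The main technical obstacle is the integrality of the $C_{L,\fk}(v)$. For $m=1$ this is exactly Habiro's cyclotomic expansion theorem for knots, whose proof rests on a delicate analysis of central elements in the integral form of $U_q(sl_2)$ and of the universal $R$-matrix. The extension to the $m$-component algebraically split case rests on the same analysis applied component by component, together with the observation that a crossing between distinct components contributes only ``balanced'' powers of the nilpotent part of $R$ when the linking number vanishes, so that the $R$-matrix action preserves the Habiro completion $\widehat U^{\otimes m}$. Once this structural result is in place, the expansion and all normalizations in the stated formula follow by routine manipulation.
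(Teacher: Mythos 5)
Your proposal is correct and gives essentially Habiro's own route to this result, but it is not the route the paper takes, so the comparison is worth spelling out. The paper's appendix works on the skein-theoretic side: it writes $J_L$ as a Kauffman bracket of $L$ cabled by the Chebyshev-type elements $e_{n_i-1}$, performs an explicit change of basis in the solid-torus skein module from $\{e_i\}$ to $\{R_k\}$ via the $q$-binomial formula, and then cites Theorem~8.2 of~\cite{Ha} for the integrality of the resulting coefficients $c_{L,\fk}\in \frac{(q^{k+1})_{k+1}}{1-q}\Z[v^{\pm1}]$. You instead work with the universal $sl_2$ invariant of a bottom tangle, invoke Habiro's structural theorem placing it in the completed tensor power $\widehat U^{\otimes m}$, expand in a topological basis of central elements whose eigenvalues on $V_n$ reproduce $(q^{1+n})_k(q^{1-n})_k$ up to normalization, and then evaluate. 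These are dual presentations of the same mechanism: the skein picture is more elementary to state and makes the basis change explicit, while your universal-invariant picture is closer to Habiro's actual argument and exposes the role of the completion directly. Two cautions about your write-up: first, the crucial last paragraph --- that crossings between algebraically split components contribute ``balanced'' powers of the nilpotent part of $R$ so that $\widehat U^{\otimes m}$ is preserved --- is precisely the hard content of Habiro's theorem and not ``the same analysis applied component by component''; as a proof of the proposition it is fine to cite this, but you should not present it as a routine verification. Second, the repackaging by $k=\max_i k_i$ with the factor $(q^{k+1})_{k+1}/(1-q)$ pulled outside is correct and is indeed a formal rearrangement, exactly as the paper's proof performs after citing Theorem~8.2.
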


\noindent {\bf Example.} Let $L$ be the $0$--framed Whitehead link.
The following formula was obtained by Habiro in \cite{Ha5}.
$$ J'_L(\l,\m)=\sum^{\infty}_{k=0} (-1)^{k} v^{-k(k+1)}(1-q)
\frac{(q^{1+\mu})_{k}(q^{1-\mu})_{k} }{(q^{k+1})_{k+1}}\;
(q^{1+\l})_{k}(q^{1-\l})_{k}$$
\v8

\section{Cyclotomic completions of polynomial rings}\label{cyc}
We present and modify some results of \cite{Ha1} here. Let $R$ be a
commutative ring with unit, and let $R[q]$ be the polynomial ring
over $R$. Recall that $\Phi_n(q)$  denotes the $n$--th cyclotomic
polynomial. If $S\subset \N$, we set $\Phi_S=\{\Phi_n(q)| n\in S\}$.
Let $\Phi^*_S$ denote the multiplicative set in $\Z[q]$ generated by
$\Phi_S$  and directed with respect to the divisibility relation.
The principal ideals $(f(q))\subset R[q]$ for $f(q)\in \Phi^*_S$
define a linear topology of the ring $R[q]$. In \cite{Ha1}, Habiro
defined the ($S$--) cyclotomic completion
 ring $R[q]^S$ as follows:
\be\label{rs} R[q]^S:=\lim_{\overleftarrow{\hspace{2mm}f(q)\in
\Phi^*_S\hspace{2mm}}} \;\;\;\frac{R[q]}{(f(q))}. \ee
For example, since the sequence $(q)_n$, $n\in \N$,
 is cofinal to $\Phi^*_\N$, we have
$$\Habiro\simeq\Z[q]^\N.$$
 Similarly, for $S=\{n\in \N\, | n\not\equiv 2 \pmod 4\}$
$$\ZZ\simeq\Z[v]^S.$$
 Note that if $S$ is finite, then
$R[q]^S$ is identified with the ($\prod \Phi_S$--)adic completion of $R[q]$.
In particular,
$$R[q]^{\{1\}}\simeq R[[q-1]], \quad
R[q]^{\{2\}}\simeq R[[q+1]].$$
Suppose $S' \subset S$, then $\Phi^*_{S'}\subset \Phi^*_S$, hence
there is natural map
$$ \rho^R_{S, S'}: R[v]^S \to R[v]^{S'}.$$
In particular, when $S=\{n\in \N\, | n\not\equiv 2 \pmod 4\}$ and
$S' = \{1\}$, the map $$\rho^\Z_{S,S'} : \ZZ \to \BZ[[v-1]]$$ is the
Taylor expansion.

Recall important results concerning  $R[v]^S$ from \cite{Ha1}. Two
positive integers $n, n'$ are called {\em adjacent} if and only if
$n'/n=p^e$ with
$e\in \Z$, for a prime $p$, such that
 the ring $R$ is $p$--adically separated.
A set of positive integers is
{\em connected} if for any two distinct elements $n,n'$ there is a
sequence $n=n_1, n_2 \dots, n_{k-1}, n_k= n'$ in the set, such that
any two consecutive numbers of this sequence are adjacent.
Theorem 4.2 of \cite{Ha1}
says that if $S$ is connected, then for any subset $S'\subset S$,
the natural map $ \rho^R_{S,S'}: R[v]^S \to R[v]^{S'}$ is an
embedding.

If $\zeta$ is a root of unity of order in $S$, then for every $f(v)\in R[v]^S$
the evaluation $\ev_\zeta(f(v))\in R[\xi]$ can be defined by sending 
$v\to\zeta$.
For a set $\Xi$ of roots of unity whose orders form a subset
$\cT\subset S$, one defines the evaluation

$$ \ev_\Xi: R[v]^S \to \prod_{\zeta \in \Xi} R[\zeta].$$

Theorem 6.1 of \cite{Ha1} shows that if
$R\subset \Q$, $S$ is connected, and there
exists $n\in S$ that is adjacent to infinitely many elements in
$\cT$, then $\ev_\Xi$ is injective.

\begin{pro} \label{zz}(a) The  Taylor expansion map
$T_{}: \ZZ \to \Z[[v-1]]$ is injective.

(b) For   a non--negative integer $k\neq 1$ and a prime number $p\neq 1$,
let
 $\cT_k=\{2^kp^e\,|\, e\in \N\}$.
 Suppose $f(v), g(v) \in \ZZ$ such that
$f(\zeta)= g(\zeta)$ for every $\zeta$ with
${\rm ord}(\zeta) \in \cT_k$, then $f(v)=g(v)$.

(c) The natural Taylor map $\ZZZ\to \BZ[I, 1/2][[1+q]] $, explained
in the proof, is injective. Here $I$ is the imaginary unit,
$I^2=-1$.
\end{pro}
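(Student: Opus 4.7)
The plan is to deduce all three parts from Habiro's general results on cyclotomic completions of polynomial rings (Theorems 4.2 and 6.1 of \cite{Ha1}, recalled in Section~\ref{cyc}) by verifying the appropriate connectedness and adjacency conditions. For (a), the identification $\ZZ \simeq \Z[v]^S$ with $S = \{n \in \N : n \not\equiv 2 \pmod 4\}$ turns the Taylor map $T$ into $\rho^{\Z}_{S,\{1\}}$, so by Theorem 4.2 of \cite{Ha1} it suffices to show that $S$ is connected (with respect to $\Z$, which is $p$-adically separated for every prime $p$). I would verify this by showing every $n \in S$ can be joined to $1$: any odd $n$ reaches $1$ by successive division by its prime factors (intermediates stay odd, hence in $S$); for $n = 2^a m$ with $a \ge 2$ and $m$ odd, first remove the odd prime factors of $m$ to reach $2^a$, then halve repeatedly to $4$ (the intermediate $2^j$ all lie in $S$ because $j \ge 2$), and finally $4 \to 1$ via the prime-power ratio $4 = 2^2$.

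Part (b) is essentially a direct application of Theorem 6.1 of \cite{Ha1} to $R = \Z$, the connected set $S$, and the set of orders $\cT_k$. The only nontrivial input is to exhibit some $n_0 \in S$ adjacent to infinitely many elements of $\cT_k$. I would take $n_0 = 2^k$: the hypothesis $k \neq 1$ forces $n_0 \in S$ (either $k = 0$ and $n_0 = 1$, or $k \ge 2$ and $n_0 \equiv 0 \pmod 4$), and for every $\zeta$ with ${\rm ord}(\zeta) = 2^k p^e \in \cT_k$ the ratio $2^k p^e / n_0 = p^e$ is a prime power, so $n_0$ is adjacent to all (infinitely many) elements of $\cT_k$. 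Theorem 6.1 then yields the claimed injectivity of the evaluation map.

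For (c), I would first identify $\ZZZ \simeq \Z[v]^{S_4}$ where $S_4 = \{d \in \N : d \equiv 0 \pmod 4\}$. This uses the factorization $1 + q^i = \prod_d \Phi_d(v)$ over divisors $d$ of $4i$ not dividing $2i$ (which all lie in $S_4$) together with the fact that for any fixed $d \in S_4$ the multiplicity of $\Phi_d$ in the product $(1+q)(1+q^2)\cdots(1+q^n)$ grows without bound in $n$, giving cofinality of the defining ideals. A connectedness check for $S_4$ parallel to (a) (using $4$ as base point, odd primes as ratios, and the ratio $2$ to move through powers of $2$ that remain $\ge 4$) combined with Theorem 4.2 of \cite{Ha1} then gives injectivity of $\ZZZ \to \Z[v]^{\{4\}}$. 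The main obstacle is the second stage $\Z[v]^{\{4\}} \to \Z[I,1/2][[1+q]]$: I would construct it by extending scalars to $\Z[I,1/2]$ and applying CRT to the coprime factorization $(v^2+1)^n = (v-I)^n(v+I)^n$ in $\Z[I,1/2][v]$ (coprime because $2I$ is a unit), projecting onto the $(v-I)$-adic completion, and noting that inside $\Z[I,1/2][[v-I]]$ the element $1+q = (v-I)(2I + (v-I))$ generates the same ideal as $v-I$, giving the identification $\Z[I,1/2][[v-I]] \cong \Z[I,1/2][[1+q]]$. For injectivity of this stage, a polynomial $f \in \Z[v]$ with vanishing image satisfies $(v-I)^n \mid f$ in $\Z[I,1/2][v]$; complex conjugation (which fixes $\Z[v]$ pointwise) then gives $(v+I)^n \mid f$ as well, and coprimality yields $(v^2+1)^n \mid f$ in $\Z[I,1/2][v]$, which a polynomial division argument upgrades to $(v^2+1)^n \mid f$ already in $\Z[v]$, so $f$ vanishes in $\Z[v]/(v^2+1)^n$ and hence the map is injective.
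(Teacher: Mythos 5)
Your argument is correct. For parts (a) and (b) it coincides with the paper's proof, with the connectedness and adjacency checks that the paper calls ``easy to see'' spelled out explicitly: connectedness of $S=\{n:n\not\equiv2\pmod4\}$ by routing through $4$ and $1$, and the choice $n_0=2^k$ (where $k\neq1$ guarantees $n_0\in S$) adjacent to all of $\cT_k$.

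For part (c) the two proofs agree on the first stage — $\ZZZ\cong\BZ[v]^{S_4}$ with $S_4=\{d:4\mid d\}$ connected, so $\rho^{\BZ}_{S_4,\{4\}}$ is injective by Theorem 4.2 of \cite{Ha1}, and $\BZ[v]^{\{4\}}\simeq\BZ[v][[1+q]]$ — but diverge on the second stage. The paper embeds $\BZ[v][[1+q]]$ into $\BZ[I,1/2][[1+q]]$ by substituting $v=I(1-(1+q))^{1/2}$, expanding the square root as a binomial series in $\BZ[1/2][[1+q]]$, and asserting the result is an embedding. You instead extend scalars, apply CRT to the coprime factorization $(v^2+1)^n=(v-I)^n(v+I)^n$ in $\BZ[I,1/2][v]$, project onto the $(v-I)$-adic completion, and identify it with $\BZ[I,1/2][[1+q]]$ using that $1+q=(v-I)(v+I)$ with $v+I$ a unit near $v=I$. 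These are two descriptions of the same map (the branch of $\sqrt q$ near $I$), and both rest on the Galois symmetry $I\mapsto-I$; your version has the advantage of proving injectivity explicitly (conjugation plus division by the monic $(v^2+1)^n$ in $\BZ[v]$), a point the paper passes over with ``we see that there is an embedding.'' You also supply the cofinality argument behind $\ZZZ\cong\BZ[v]^{S_4}$, which the paper states without justification. These are completions of details the paper omits rather than a different overall strategy.
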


\begin{proof} (a)
It is easy to see that $S=\{n\in \N\, | n\not\equiv 2 \pmod 4\}$  is
connected. Note that if $S'=\{1\}$  then $\Z[v]^{S'} = \Z[[v-1]]$.
Hence part (a) follows from the above mentioned Theorem 4.2 of
\cite{Ha1}.

(b) Since $k\neq1$, $2^k \in S$, and by assumption, $2^k$ is
adjacent to every element in $\cT$. Part (b) follows from the above
mentioned \cite[Theorem 6.1]{Ha1}.

(c) It is easy to see the set $S=\{n\in \N\,|\, n\equiv0\pmod 4\}$
is connected, and $\ZZZ \cong \Z[v]^{S}$. Hence $\rho^{\BZ}_{S,
\{4\}}$ is injective.
For the set $\{4\}$, we
have
$$ \Z[v]^{\{4\}} \simeq  \BZ[v][[1+v^2]] \simeq \BZ[v][[1+q]].$$

Using
$$ v = \sqrt q = \sqrt {-(1 - (1+q))}= I (1- (1+q))^{1/2} \in
\BZ[I][1/2][[1+q]]$$
we see that there is an embedding of $\Z[v]^{\{4\}}$ into $\BZ[I,
1/2][[1+q]]$, which, combined with $\rho^{\BZ}_{S, \{4\}}$, gives us
the injective Taylor map.\end{proof}

Let $S_k=\{2^k(2n+1)\,|\, n\in \N\}$.  Then  for every $f(v)
\in \Z[1/2,I][v]^{S_k}$, and any root of unity $\zeta$ of order in $S_k$,
the evaluation
map $\ev_\zeta:\Z[1/2,I][v]^S\to Z[1/2][\zeta]$ can be defined as follows:
$\ev_{\zeta}(v)=\zeta$,
$\ev_{\zeta}(I)=\zeta^{\rm ord(\zeta)/4}$.

Let us study for   $\G :=\Z[1/2][v]$ and $S=\{n\in \N\,|\,
n\not\equiv 2\pmod 4\}$, the completion
  $\G ^S$  mentioned in Introduction. Note that $S$
is not connected in Habiro sense for $R=\Z[1/2]$.
We have $S=\cup_{k\in \N, k\neq 1} S_k$.

\begin{pro}\label{zzz}

(a) One has  $$\G^S=\prod_{k\in\N,k\neq 1} \G^{S_k}\, .$$

(b) For   an integer $k\geq 2$ and an
odd prime number $p\neq 1$,
let
 $\cT_k=\{2^kp^e\,|\, e\in \N\}$.
 Suppose $f(v), g(v) \in \Z[1/2,I][v]^{S_k}$ such that
$\ev_{\zeta}(f(v))= \ev_\zeta(g(v))$ for every $\zeta$
with ${\rm ord}(\zeta) \in \cT_k$, then $f(v)=g(v)$.

(c) For   an odd prime number $p\neq 1$,
let
 $\cT=\{2^kp^e\,|\,k\in \N, e\in \N\}$.
 Suppose $f(v), g(v) \in \G^{S}$ such that
$\ev_{\zeta}(f(v))= \ev_\zeta(g(v))$ for every $\zeta$
with ${\rm ord}(\zeta) \in \cT$, then $f(v)=g(v)$.
\end{pro}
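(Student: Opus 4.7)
The plan is as follows. For part (a), I would first observe that $S$ decomposes as the disjoint union $\bigsqcup_{k \in \N,\, k \neq 1} S_k$ stratified by the $2$-adic valuation, and reduce the assertion to a Chinese Remainder Theorem argument in the inverse limit. The key step is to verify that for $n \in S_k$ and $n' \in S_{k'}$ with $k \neq k'$ (both different from $1$), the cyclotomic polynomials $\Phi_n$ and $\Phi_{n'}$ generate the unit ideal of $\G[v]$. The classical resultant computation for cyclotomic polynomials over $\Z$ gives $\mathrm{Res}(\Phi_n, \Phi_{n'}) = \pm 1$ unless $n/n'$ is a prime power in $\Q$, in which case it is a power of the relevant prime. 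Since $v_2(n) \neq v_2(n')$, either $n/n'$ is not a prime power (and $\Phi_n$, $\Phi_{n'}$ are already coprime in $\Z[v]$) or $n/n' = \pm 2^e$; but $2 \in \G^\times$, so in all cases $(\Phi_n) + (\Phi_{n'}) = \G[v]$. Applying CRT to the finite quotients $\G[v]/(f)$ for $f \in \Phi_S^*$ and passing to the inverse limit then produces the desired product decomposition.

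For part (b), I would apply Theorem 6.1 of \cite{Ha1}. First, $S_k$ must be shown connected in the sense appropriate to $R = \Z[1/2, I]$. Since $\Z[1/2, I]$ is a free $\Z[1/2]$-module of rank two and $\Z[1/2]$ is $p$-adically separated for every odd prime $p$, the ring $\Z[1/2, I]$ is likewise $p$-adically separated for every odd prime. Hence adjacency reduces to $n'/n = p^e$ with $p$ an odd prime, and multiplying or dividing by one odd prime factor at a time shows that every element of $S_k$ is connected to $2^k$. Second, $2^k \in S_k$ is adjacent to every $2^k p^e \in \cT_k$ with ratio $p^e$, so it is adjacent to infinitely many elements of $\cT_k$. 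Theorem 6.1 of \cite{Ha1} then yields injectivity of the evaluation and the desired equality $f = g$.

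Part (c) follows by combining (a) and (b). Using the isomorphism of part (a), write $f = (f_k)_{k \neq 1}$ and $g = (g_k)_{k \neq 1}$ in $\prod_{k \neq 1} \G^{S_k}$. For any root of unity $\zeta$ of order $2^k p^e \in \cT$ with $k \neq 1$, the evaluation $\ev_\zeta$ factors through the projection to the $k$-th component, so the hypothesis forces $\ev_\zeta(f_k) = \ev_\zeta(g_k)$. For $k \geq 2$, part (b) (applied after enlarging the coefficient ring to $\Z[1/2, I]$) gives $f_k = g_k$. For $k = 0$, the analogous argument with $R = \Z[1/2]$ (no adjunction of $I$ is required, since $\zeta$ has odd order) together with Theorem 6.1 of \cite{Ha1} gives $f_0 = g_0$. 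Note that roots of order in $\cT_1$ do not contribute, since $\cT_1 \cap S = \emptyset$ and $\ev_\zeta$ is only defined on $\G^S$ when $\mathrm{ord}(\zeta) \in S$.

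The main obstacle will be the coprimality verification in part (a): one must carefully exploit that $2 \in \G^\times$ so that cyclotomic resultants which are powers of $2$ become units, and then ensure the CRT decomposition of finite quotients passes cleanly through the cofinal system of products in $\Phi_S^*$. Once this structural decomposition is in place, parts (b) and (c) reduce to routine applications of Habiro's connectedness and injectivity theorems.
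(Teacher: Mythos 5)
Your proposal is correct and follows essentially the same route as the paper: (a) via coprimality of $\Phi_n,\Phi_m$ in $\Z[1/2][v]$ when $v_2(n)\neq v_2(m)$ (your resultant formulation is a slightly more explicit version of the paper's adjacency argument, both hinging on $2$ being invertible) followed by CRT and inverse limit; (b) via connectedness of $S_k$ over a $p$-adically separated ring and Habiro's Theorem 6.1; (c) by combining (a) and (b) componentwise. Your extra care in (c) — noting that $k=0$ needs the variant with $R=\Z[1/2]$ rather than $\Z[1/2,I]$, and that $\cT_1\cap S=\emptyset$ so those roots contribute nothing — fills in details the paper compresses into ``an easy consequence of (a) and (b).''
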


\begin{proof}
(a) Let us first prove
 that if $n=2^k n'\in S_k$ and $m=2^l m'\in S_l$ with 
$k\neq l$, then $(\Phi_n,\Phi_m)=(1)$ in $\G=\Z[1/2][v]$.
Indeed, if $n'\neq m'$, then $n$ and $m$ are not adjacent, hence
$(\Phi_n,\Phi_m)=(1)$ in $\Z[v]$ and 
the claim holds. If $n'=m'$, then $n/m=2^{k-l}$, hence in $\Z[v]$
one has $2\in (\Phi_n,\Phi_m)$, which implies
the claim,  since $2$ is invertible in $\G$.

Suppose $f\in \Phi^*_S$, then $f=\prod f_k$ with $f_k\in \Phi^*_{S_k}$.
Hence the $f_k$'s are pairwise coprime. By the Chinese remainder theorem,
$$\frac{\G}{(f)}=\prod_k \frac{\G}{(f_k)}\, .$$
Taking the inverse limit, we get (a).

(b) It is easy to see that $S_k$ is connected
in Habiro's sense for the ring $\Z[1/2,I]$. Hence
 Part (c) follows by adopting the proof of \cite[Theorem
6.1]{Ha1} to the ring $Z[1/2,I]$, which is straightforward.

(c) is an easy consequence of (a) and (b).

 \end{proof}


\section{Laplace transform}

In this section we introduce the Laplace transform method.
For simplicity, we  restrict to homology spheres  obtained
by  surgery on knots, the general
case can be obtained by applying
the same arguments to each component of the link.

\subsection{Quantum invariants for knot surgeries}
For any non--zero integer $b$, and a knot $K$,
let $M=S^3(K_b)$ be a homology sphere obtained by surgery on $K$ with
 framing $b$. Assume that $\xi$ is
 a primitive $r$--th root of unity and $r$ is even.
The  quantum $SU(2)$ invariant of $M$ is defined as follows:
 \be\label{inv} \tau_M(\xi)= \frac{{\sum\limits_\l}^\xi
\; q^{\frac{b(\l^2-1)}{4}} \;\,
(1-q^\l)(1-q^{-\l})J'_K(\l)}{{\sum\limits_{\l}}^\xi
q^{\frac{sn(b)(\l^2-1)}{4}}\;\, (1-q^\l)(1-q^{-\l})}\; , \ee where
$sn(b)$ is the sign of $b$. To be precise, one needs to fix a 4--th
root of $\xi$.
Note that when computing the Jones polynomial
of a knot (or a link) further in this paper, we always
assume that its framing
is zero. However, in the formula for the quantum invariant,
 framing is taken into account by means of the factor $q^{b(\l^2-1)/4}$.

Substituting Habiro's formula (\ref{hab}) into (\ref{inv}) we get
\be\label{inv1} \tau_M(\xi)= \frac{{\sum\limits_{\l}}^\xi\;
q^{\frac{b(\l^2-1)}{4}} \;\,\sum\limits^{\infty}_{n=0}\,
  C_{K,n} F_n(q^\l,q)}{{\sum\limits_{\l} }^\xi\;
q^{\frac{sn(b)(\l^2-1)}{4}}\;\, F_0(q^\l,q)}\, , \ee where
$F_n(q^\l,q)=(q^\l)_{n+1}(q^{-\l})_{n+1}$.

Suppose  $r$ is odd. Then, taking the sums over odd $\l$ in the
numerator and the denominator of (\ref{inv}) we get the  $\tau^{SO(3)}_M$.
 In this case, there is no need to fix a  4--th root of
$\xi$.

\subsection{Laplace transform method}

The main idea behind the Laplace transform method is to interchange
the sums over $\l$ and $n$ in (\ref{inv1}) and regard
$\sum^{\xi}_{\l } q^{b(\l^2-1)/4}$ as an operator (called Laplace
transform) acting on  $F_n(q^\l,q)$. (Recall that $\int e^{-ax^k} f(x) dx$
is called Laplace transform of $f$ of order $k$. Our  sum is a discrete
version of the Laplace transform of the second order.)

More precisely, after interchanging the sums in the numerator of
(\ref{inv1}) we get
$$\sum^{r-1}_{n=0}C_{K,n}(q){\sum_{\l }}^\xi
q^{b\frac{(\l^2-1)}{4}} F_n(q^\l,q)\, .$$ Now  observe, that
$F_n(q^\l,q)=(q^\l)_{n+1}(q^{-\l})_{n+1}$ is a Laurent polynomial in
two variables $q^\l$ and $q$. The Laplace transform does not affect
$q$, and  we only need  to compute the action of the  Laplace
operator on $q^{a\l}$.

Suppose the greatest common divisor of $b$ and $r$ is 1 or 2,
and $r$ is even. A
simple square completion argument shows that
$${\sum_{\l}}^\xi q^{\frac{b(\l^2-1)}{4}}\, q^{a\l}=
\xi^{-\frac{a^2\, b^*}{(b,r)}}\,
\gamma_{b}(\xi) \, $$ where $b^*$ is an integer such
that $b^* b =(b,r) \pmod r$.
Summarizing the previous discussion, we get
$${\sum_{\l}}^\xi
q^{\frac{b(\l^2-1)}{4}}\, F_n(q^\l,q)= \ev_\xi(L_{b;\l}(F_n(q^\l,q)))\;
\gamma_{b}(\xi)\, .$$
Here $L_{b;\l}(F)$ is the Laplace transform of $F$,
which is defined as follows. Suppose  $F$ is a formal power series
in $q^{\pm 1}$ and $q^{\pm \l}$. Then $L_{b;\l}(F)$ is obtained from $F$
by  replacing every $q^{a\l}$  by $q^{-a^2/b}$. The evaluation map
$\ev_\xi$ converts $q^{1/b}$ to $(\xi^{1/(b,r)})^{b*}$. Note that
while $\ev_\xi$ might depend on $r$, the Laplace transform $L_{b;\l}$ does
not. Also if $b=1$ or $b=2$, then $\ev_\xi$ does not depend on
$r$. In these cases, $\ev_r(q^{1/b})=\xi^{1/b}$.

If $r$ is odd and $(b,r)=1,2$, we can define the Laplace transform
$L_{b;\l}:\BZ[q^{\pm \l},q^{\pm 1}]\to \BZ[q^{\pm 1/b}]$
as a $\BZ[q^{\pm 1}]$--linear operator
 sending  $q^{a\l} \mapsto q^{-a^2/b}$.
 In this case, we have
$${\sum_{\l }}^{\xi,1}
q^{\frac{b(\l^2-1)}{4}}\, F_n(q^\l,q)= \ev_\xi(L_{b;\l}(F_n(q^\l,q)))\;
\gamma^1_{b}(\xi)\; .$$


As a result,  we have  closed formulas for  quantum
invariants in terms of the Laplace transform.

\begin{thm}\label{le}
Let $M=S^3(K_b)$ and $(b,r)=1$ or 2.  Then
$$\tau_M(\xi)=\frac{1}{2(1-\xi^{-sn(b)})}
\frac{\gamma_{b}(\xi)}{\gamma_{sn(b)}(\xi)}\;
\sum^\infty_{n=0} C_{K,n} \ev_\xi(L_b(F_n))\, ,$$
$$\tau^{SO(3)}_M(\xi)
=\frac{1}{2(1-\xi^{-sn(b)})}
\frac{\gamma^1_{b}(\xi)}{\gamma^1_{sn(b)}(\xi)}\;
\sum^\infty_{n=0} C_{K,n} \ev_\xi(L_b(F_n))\, .$$

\end{thm}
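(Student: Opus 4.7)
The plan is to start from the identity (\ref{inv1}), which comes from substituting Habiro's cyclotomic expansion (\ref{hab}) into the surgery formula (\ref{inv}) for $\tau_M(\xi)$, and then to interchange the sums over $\lambda$ and $n$ in the numerator. This interchange is permitted because both sums are effectively finite: $\lambda$ ranges over $\{0,1,\dots,2r-1\}$, and for each $\lambda$ the factor $(q^{-\lambda};q)_{n+1}$ in $F_n(q^\lambda,q)$ contains the zero factor $1-q^0$ whenever $n\ge\lambda$, so the double sum is supported on the finite set $\{(n,\lambda):0\le n<\lambda\le 2r-1\}$.

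After swapping, the task reduces, for each fixed $n$, to evaluating $\sum_\lambda^\xi q^{b(\lambda^2-1)/4}F_n(q^\lambda,q)$. I would expand $F_n(q^\lambda,q)$ as a $\BZ[q^{\pm1}]$-linear combination of monomials $q^{a\lambda}$ and apply the Gauss-sum identity $\sum_\lambda^\xi q^{b(\lambda^2-1)/4}q^{a\lambda}=\ev_\xi(q^{-a^2/b})\,\gamma_b(\xi)$, established just before the theorem by square completion, term by term. By $\BZ[q^{\pm1}]$-linearity of $L_{b;\lambda}$ this collapses to $\gamma_b(\xi)\,\ev_\xi(L_b(F_n))$, and hence the numerator equals $\gamma_b(\xi)\sum_{n\ge 0}C_{K,n}(q)\,\ev_\xi(L_b(F_n))$, with only finitely many nonvanishing terms.

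For the denominator, I would apply the same Laplace procedure with $b$ replaced by $sn(b)=\pm1$ to $F_0(q^\lambda,q)=(1-q^\lambda)(1-q^{-\lambda})=2-q^\lambda-q^{-\lambda}$. Since $L_{sn(b)}$ sends $q^{\pm\lambda}\mapsto q^{-1/sn(b)}=q^{-sn(b)}$, one obtains $L_{sn(b)}(F_0)=2-2q^{-sn(b)}$, so the denominator equals $2(1-\xi^{-sn(b)})\gamma_{sn(b)}(\xi)$. Dividing numerator by denominator yields the stated closed formula for $\tau_M(\xi)$.

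The $SO(3)$ case is handled in exactly the same way, replacing every $\sum^\xi$ by $\sum^{\xi,1}$ (restriction to odd $\lambda$) and every $\gamma_\bullet$ by $\gamma^1_\bullet$; the parallel identity $\sum_\lambda^{\xi,1}q^{b(\lambda^2-1)/4}q^{a\lambda}=\ev_\xi(q^{-a^2/b})\,\gamma^1_b(\xi)$ noted in the text applies without modification. The proof is essentially a reshuffling of finite sums combined with the formal Laplace-transform identity, so there is no deep obstacle; the one item requiring care, and the closest thing to a subtle point, is the bookkeeping of the fixed fourth root of $\xi$ in the even-$r$ case, ensuring that the same choice of $q^{1/b}$ is used in the numerator and the denominator so that $\ev_\xi$ is well-defined on the quotient.
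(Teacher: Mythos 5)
Your proposal is correct and reproduces essentially the same argument the paper gives: the paper derives Theorem~\ref{le} by "summarizing the previous discussion," namely substituting Habiro's cyclotomic expansion into the surgery formula, interchanging the $\lambda$- and $n$-sums, and applying the Gauss-sum/Laplace-transform identity term by term to both numerator and denominator, exactly as you describe (including the computation $L_{sn(b)}(F_0)=2-2q^{-sn(b)}$ that produces the $2(1-\xi^{-sn(b)})$ factor and the parallel treatment of the $SO(3)$ case with $\sum^{\xi,1}$ and $\gamma^1_\bullet$).
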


\section{Habiro theory}
In this section we show how Theorem \ref{le}
can be used to compute Habiro's
 unified  invariant of integral homology spheres.

\subsection{Knot surgeries}
Any knot surgery with framing $b=\pm 1$ yields an integral homology sphere.
Combining
Theorem \ref{le} with Lemma \ref{l1} below we get the following
theorem.

\begin{thm}\label{le1} (Habiro)
For $M_\pm=S^3(K_{\pm 1})$, there exists a unique invariant
$$ I_{M_+}(q)=
\sum^\infty_{n=0} (-1)^{n}  q^{-\frac{n(n+3)}{2}} C_{K,n}(q)
\frac{(q^{n+1})_{n+1}}{1-q}\,\in \Habiro ,$$
$$ I_{M_-}(q)=
\sum^\infty_{n=0} C_{K,n}(q) \frac{ (q^{n+1})_{n+1}}{1-q}\, \in \Habiro\, ,$$
such that $I_{M_{\pm}}(\xi)=\tau_{M_\pm}(\xi)=\tau^{SO(3)}_{M_\pm}(\xi)$.
\end{thm}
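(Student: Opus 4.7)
The plan is to specialize Theorem \ref{le} to $b=\pm 1$ and then carry out the Laplace transform computation explicitly (the content of Lemma \ref{l1} and an analogous companion lemma). First I would observe that when $b=\pm 1$ we have $\mathrm{sn}(b)=b$, so the Gauss sum ratio $\gamma_b/\gamma_{\mathrm{sn}(b)}$ in Theorem \ref{le} equals $1$; moreover $(b,r)=1$ and $b^\ast=b$, so $\ev_\xi(q^{1/b})=\xi^{b}$ unambiguously, with no dependence on a fourth root of $\xi$. In particular the numerator-denominator formulas for $\tau_{M_\pm}$ and $\tau^{SO(3)}_{M_\pm}$ coincide, yielding
\[
\tau_{M_\pm}(\xi)=\tau^{SO(3)}_{M_\pm}(\xi)=\frac{1}{2(1-\xi^{\mp 1})}\sum_{n\ge 0}C_{K,n}(\xi)\,\ev_\xi\bigl(L_{\pm 1}(F_n)\bigr),
\]
where $F_n=(q^\l;q)_{n+1}(q^{-\l};q)_{n+1}$.

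Next I would invoke the key Laplace-transform identities (the purpose of Lemmas \ref{l1} and \ref{l2}, whose acknowledgment to Dennis Stanton appears in the introduction). After grouping $F_n$ by powers of $q^\l$ one checks that both are palindromic in $\l\mapsto -\l$, and the computation I expect is
\[
L_{-1}(F_n)=2\,(q^{n+1})_{n+1},\qquad L_{+1}(F_n)=2(-1)^{n+1}q^{-(n+1)(n+2)/2}(q^{n+1})_{n+1}.
\]
Small cases $n=0,1$ confirm these formulas, and the general identity is a standard sort of finite $q$-hypergeometric evaluation. Substituting into the formula for $\tau_{M_-}$ and simplifying $\tfrac{2(q^{n+1})_{n+1}}{2(1-\xi)}$ gives exactly $\ev_\xi$ of the proposed $I_{M_-}(q)$. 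For $M_+$, the factor $\tfrac{1-q^{-1}}{1-q}=-q^{-1}$ combines with the $q^{-(n+1)(n+2)/2}$ and the sign to yield $(-1)^n q^{-n(n+3)/2}$, matching the stated formula for $I_{M_+}(q)$.

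Finally I would verify that the candidates $I_{M_\pm}(q)$ genuinely lie in $\Habiro$: since $C_{K,n}(q)\in\Z[q^{\pm 1}]$ by Habiro's cyclotomic expansion (\ref{hab}) and $(q^{n+1})_{n+1}/(1-q)\in \Z[q]$, each summand is in $\Z[q^{\pm 1}]$; convergence in $\Habiro=\varprojlim \Z[q]/(q)_n$ follows from $(q^{n+1})_{n+1}\cdot(q)_n=(q)_{2n+1}$, so the $n$-th term lies in the ideal defining the $n$-th quotient. Uniqueness of the lift is immediate from injectivity of the evaluation map $\Habiro\hookrightarrow\prod_\xi\Z[\xi]$ on any infinite family of roots of unity, as recalled in the introduction. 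The substantive obstacle is not the substitution or the Habiro-ring bookkeeping, which are routine, but the two closed-form Laplace transform evaluations; these are the $q$-series identities that do the real work and are where the Stanton simplification is used.
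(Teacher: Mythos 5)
Your proposal is correct and follows essentially the same route as the paper: specialize Theorem \ref{le} to $b=\pm 1$ (so that $\mathrm{sn}(b)=b$ and the Gauss-sum ratio $\gamma_b/\gamma_{\mathrm{sn}(b)}$ drops out), then plug in the two Laplace-transform evaluations, which are exactly Lemma \ref{l1} (one lemma with two formulas, not a ``companion lemma''), and simplify the prefactor $\tfrac{1}{2(1-\xi^{\mp 1})}$. You correctly identify the closed-form Laplace evaluation as the genuine content; the paper's proof is literally ``Combining Theorem \ref{le} with Lemma \ref{l1}'' followed by the $q$-binomial/Sears--Carlitz proof of that lemma.
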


\noindent

\v8

\noindent {\bf Examples.} Denote by $3_1$ and $4_1$ the Poincare
sphere and
 the $3$--manifold obtained by framing 1
surgery on figure 8 knot. By Theorem \ref{le1}, we have
$$I_{3_1}(q)= \frac{q}{1-q}\sum^\infty_{k=0}(-1)^k
q^{-\frac{(k+2)(3k+1)}{2}} (q^{k+1})_{k+1}$$
$$I_{4_1}(q)= \frac{q}{1-q}\sum^\infty_{k=0}(-1)^k
q^{-(k+1)^2} (q^{k+1})_{k+1}$$

\begin{lem} \label{l1}
\be\label{lap1}L_{-1}((q^\l)_{k+1}(q^{-\l})_{k+1})=2(q^{k+1})_{k+1}\, . \ee
\be\label{lap2}
L_1((q^\l)_{k+1}(q^{-\l})_{k+1})=2(-1)^{k+1} q^{-\frac{(k+2)(k+1)}{2}}
(q^{k+1})_{k+1}\, .\ee
\end{lem}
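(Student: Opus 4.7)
The plan is to expand $F_k(y) := (y;q)_{k+1}(y^{-1};q)_{k+1}$ (with $y = q^\lambda$) as a Laurent polynomial in $y$, apply $L_b$ term by term (sending $y^m \mapsto q^{-m^2/b}$), and reduce the resulting finite sum to a classical $q$-series identity. My first step is to rewrite $F_k$ so that only a single $q$-binomial expansion is needed. Using $1 - q^i y^{-1} = -q^i y^{-1}(1 - q^{-i}y)$ one verifies
$$(y^{-1};q)_{k+1} = (-1)^{k+1} q^{k(k+1)/2} y^{-(k+1)} (q^{-k}y;q)_{k+1},$$
and since $(y;q)_{k+1}$ and $(q^{-k}y;q)_{k+1}$ share exactly the factor $(1-y)$, their product equals $(1-y)(q^{-k}y;q)_{2k+1}$. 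Thus
$$F_k(y) = (-1)^{k+1} q^{k(k+1)/2} y^{-(k+1)} (1-y)(q^{-k}y;q)_{2k+1}.$$

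Next I expand $(q^{-k}y;q)_{2k+1}$ by the $q$-binomial theorem, substitute, apply $L_b$ term by term, and re-index by $j = i - k$; a short computation gives
$$L_b(F_k) = \sum_{j=-k}^{k+1}(-1)^{j+1} q^{j(j-1)/2}\binom{2k+1}{k+j}_q \bigl(q^{-(j-1)^2/b} - q^{-j^2/b}\bigr).$$
The crucial step is the change of variable $j \mapsto 1-j$: combined with the symmetry $\binom{2k+1}{k+1-j}_q = \binom{2k+1}{k+j}_q$, it shows that the $q^{-(j-1)^2/b}$ part of this sum is the negative of its $q^{-j^2/b}$ part, so the difference doubles instead of cancels:
$$L_b(F_k) = 2\sum_{j=-k}^{k+1}(-1)^j q^{j(j-1)/2 - j^2/b}\binom{2k+1}{k+j}_q.$$

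For $b = -1$ the exponent $j(j-1)/2 + j^2$ equals $j(3j-1)/2$, so the claim reduces to the finite Jacobi triple product identity
$$\sum_{j=-k}^{k+1}(-1)^j q^{j(3j-1)/2}\binom{2k+1}{k+j}_q = (q^{k+1};q)_{k+1}, \qquad (*)$$
which gives \eqref{lap1}. For $b = 1$ the exponent becomes $-j(j+1)/2$; applying $(*)$ with $q$ replaced by $q^{-1}$, together with the routine identities
$$(q^{-k-1};q^{-1})_{k+1} = (-1)^{k+1}q^{-(k+1)(3k+2)/2}(q^{k+1};q)_{k+1}, \qquad \binom{2k+1}{k+j}_{q^{-1}} = q^{-(k+j)(k+1-j)}\binom{2k+1}{k+j}_q,$$
yields after cancellation of $q$-powers the value $2(-1)^{k+1}q^{-(k+1)(k+2)/2}(q^{k+1};q)_{k+1}$, i.e.\ \eqref{lap2}.

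The main obstacle is the finite identity $(*)$; it can be proved by induction on $k$ via the $q$-Pascal recurrence for $\binom{2k+1}{k+j}_q$, or by specializing the classical Jacobi triple product. The simplification credited to Stanton presumably refers precisely to this clean reduction through $(*)$, in place of the cumbersome double sum one obtains by expanding $(y;q)_{k+1}$ and $(y^{-1};q)_{k+1}$ separately and only then applying $L_b$.
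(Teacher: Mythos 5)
Your proof is correct, and it follows a genuinely different path from the paper's. The paper splits $F_k = S_k + T_k$ with $S_k=(q^\l)_{k+1}(q^{-\l+1})_k$ and $T_k=-q^{-\l}(q^\l)_{k+1}(q^{-\l+1})_k$, observes $S_k(q^{-\l},q)=T_k(q^\l,q)$ so that $L_b(F_k)=2L_b(S_k)$, expands $S_k$ by the $q$-binomial theorem, and then evaluates the resulting single sum by the Sears--Carlitz transformation for terminating ${}_3\phi_2$ series (eq.\ (III.14) in Gasper--Rahman) with $a=q^{-2k-1}$, $b,c\to\infty$, $z\to q^{k+2}$. You instead expand the whole $F_k$ at once, extract the factor $2$ from the involution $j\mapsto 1-j$ on the resulting finite sum, and finish with the finite pentagonal-number identity $\sum_{j=-k}^{k+1}(-1)^j q^{j(3j-1)/2}\binom{2k+1}{k+j}_q = (q^{k+1};q)_{k+1}$. (I checked your reductions and the identity $(*)$ for $k=0,1,2$; everything is consistent.) The two ``factor-of-2'' mechanisms are closely related---both come from the evenness of $m\mapsto q^{-m^2/b}$---but they are not literally the same involution: the paper uses $\l\mapsto -\l$ on $S_k$ before expanding, while yours is the index shift $j\mapsto 1-j$ after expanding. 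The end reduction is also different in spirit: the paper invokes the fairly heavy Sears--Carlitz machinery, whereas your identity $(*)$ is an elementary, self-contained fact (inductive or a specialization of Jacobi triple product), which makes the argument more transparent at the cost of having to supply $(*)$ separately. Both handle $b=+1$ by $q\mapsto q^{-1}$ conjugation of the $b=-1$ case, which is the same idea. One small misattribution: the simplification the paper credits to Stanton is the Sears--Carlitz route, not a reduction through $(*)$.
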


\begin{proof}
Let $L_b: \Z[x^{\pm 1},q^{\pm 1}]\to \Z[q^{\pm 1/b}]$
be a $\Z[q^{\pm 1}]$--linear operator sending $x^a\mapsto q^{-a^2/b}$.
Then for  $F(x,q)\in \Z[x^{\pm 1},q^{\pm 1}]$ we have
$$L_{-b}(F(x,q))= q^{k(k+1)}\, L_b(F(x,q^{-1}))\, .$$
Using this formula we can deduce (\ref{lap2})  from (\ref{lap1}).

Let us prove (\ref{lap1}). For this, we split
$$F_k(q^\l,q)=S_k(q^\l,q)+T_k(q^\l,q)$$
with $S_k(q^\l,q)=(q^\l)_{k+1}(q^{-\l+1})_k$ and
$T_k(q^\l,q)=-q^{-\l} (q^\l)_{k+1}(q^{-\l+1})_k$.
Then $S_k(q^{-\l},q)=T_k(q^\l,q)$ implies
$L_{b;\l}(S_k)=L_{b;\l}(T_k)$
for any $b$. Therefore, we have to look at one of them only.

Further, by the $q$--binomial theorem (eq. (II.4) in \cite{GR}) we get
$$S_k(q^\l,q)=(-1)^k q^{-k\l}q^{k(k+1)/2}(q^{\l-k})_{2k+1}=$$
$$
(-1)^kq^{\frac{k(k+1)}{2}}\sum^{2k+1}_{j=0} (-1)^j\left[\begin{array}{c}
2k+1\\ j\end{array}\right]_q q^{\frac{j(j-1)}{2}}q^{-kj}q^{(j-k)\l}\, $$
where
$$\left[\begin{array}{c}n\\k\end{array}\right]_q
=\frac{(q)_n}{(q)_k(q)_{n-k}}\, .
$$
Taking the Laplace transform we have
$$L_{-1}(S_k(q^\l,q))=(-1)^k q^{\frac{3k^2+k}{2}}\sum^{2k+1}_{j=o}\frac{
(q^{-2k-1})_j}{(q)_j}q^{j^2+j-jk}\, .$$
The result follows now by applying the
 Sears--Carlitz transformation (eq. (III.14)
in \cite{GR}) for terminating $_3\phi_2$ series with specializations
$a=q^{-2k-1}$, $b,c\to \infty$, $z\to q^{k+2}$.
\end{proof}

\subsection{Link surgeries}\label{2.2}
Suppose  $M$ be an integral homology
sphere.  Without loss of generality, we  can assume that
$M$ is obtained
by surgery on  an algebraically split link $L$ in $S^3$
 with framings $\pm 1$.
Suppose that the  first $\sigma_+$ components have framing $+1$,
and the others $-1$. Substituting cyclotomic expansion
of the colored Jones polynomial (given in Proposition \ref{link})
into (\ref{def})
and applying the Laplace transform method to each component of $L$,
we derive the following formula for the
  unified  invariant of $M$.
\begin{thm} (Habiro) For $M$ as above, there exists a unique invariant
\be\label{inv11} I_M(q)= \sum_{k=0}^\infty   \left(\sum_{\max
k_i=k} C_{L,\fk}(v)
 \prod\limits^{\sigma_+}_{i=1}
(-1)^{k_i} q^{-\frac{k_i(k_i+3)}{2}}\right) \frac{(q^{k+1})_{k+1}}{
(1-q)} \, \in \Habiro \ee
such that $I_{M}(\xi)=\tau_{M}(\xi)=\tau^{SO(3)}_{M}(\xi)$.
\end{thm}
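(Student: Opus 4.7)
The plan is to combine the Laplace transform of Theorem \ref{le} with Habiro's cyclotomic expansion for algebraically split links (Proposition \ref{link}), applying the transform independently on each of the $l$ components. Since $L$ is algebraically split with framings $b_i\in\{\pm1\}$, the quadratic twist decouples, and the same manipulation that yields (\ref{inv}) from the definition in Section \ref{def} gives
$$\tau_M(\xi)=\frac{{\sum_{\l_1,\dots,\l_l}}^{\xi}\prod_{i=1}^l q^{b_i(\l_i^2-1)/4}(1-q^{\l_i})(1-q^{-\l_i})\,J'_L(\l_1,\dots,\l_l)}{\prod_{i=1}^l{\sum_{\l}}^{\xi} q^{b_i(\l^2-1)/4}(1-q^{\l})(1-q^{-\l})}.$$
Substituting Habiro's expansion of $J'_L$ into the numerator and using $(1-q^{\l_i})(1-q^{-\l_i})(q^{1+\l_i})_{k_i}(q^{1-\l_i})_{k_i}=(q^{\l_i})_{k_i+1}(q^{-\l_i})_{k_i+1}$, the numerator factors as a sum over $\fk$ of a product of independent one-variable sums ${\sum_{\l_i}}^\xi q^{b_i(\l_i^2-1)/4}(q^{\l_i})_{k_i+1}(q^{-\l_i})_{k_i+1}$, each of exactly the form treated by Lemma \ref{l1}.

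Applying Lemma \ref{l1} component by component produces, for each $i$, a factor $\gamma_{b_i}(\xi)$ times either $2\,(q^{k_i+1})_{k_i+1}$ (if $b_i=-1$) or $2(-1)^{k_i+1}q^{-(k_i+2)(k_i+1)/2}(q^{k_i+1})_{k_i+1}$ (if $b_i=+1$). The denominator is evaluated directly from $L_{b;\l}(q^{\pm\l})=q^{-1/b}$ to give ${\sum_{\l}}^{\xi} q^{b(\l^2-1)/4}(1-q^{\l})(1-q^{-\l})=2(1-q^{-{\rm sn}(b)})\gamma_b(\xi)$. In the ratio, the Gauss sums $\gamma_{+1}(\xi)^{\sigma_+}\gamma_{-1}(\xi)^{\sigma_-}$ cancel exactly, the factors of $2$ match, and using $(1-q^{-1})=-q^{-1}(1-q)$ the $(1-q)^l$ from Proposition \ref{link} cancels $(1-q^{-1})^{\sigma_+}(1-q)^{\sigma_-}$ up to a twist $(-1)^{\sigma_+}q^{-\sigma_+}$. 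Combining this twist with the per-component $(-1)^{k_i+1}q^{-(k_i+2)(k_i+1)/2}$ collapses to exactly $\prod_{i:b_i=+1}(-1)^{k_i}q^{-k_i(k_i+3)/2}$, while each $(q^{k_i+1})_{k_i+1}$ cancels the matching denominator in Proposition \ref{link}, leaving the overall $(q^{k+1})_{k+1}/(1-q)$ as in (\ref{inv11}).

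It remains to verify membership in $\Habiro$, the $SO(3)$ equality, and uniqueness. For membership in $\Habiro$, the coefficients $C_{L,\fk}(v)$ are in $\Z[v^{\pm1}]$ by Proposition \ref{link}, the twist $(-1)^{k_i}q^{-k_i(k_i+3)/2}$ lies in $\Z[q^{\pm1}]$ (since $k_i(k_i+3)$ is even), and $(q^{k+1})_{k+1}/(1-q)\in\Z[q]$ is the standard element of $\Habiro$ already used in Theorem \ref{le1}. For the $SO(3)$ identity (valid when the order of $\xi$ is odd) one restricts all $\l_i$-sums to odd values on both numerator and denominator; the square-completion argument underlying Lemma \ref{l1} is insensitive to parity, so the computation goes through with $\gamma^1_b$ in place of $\gamma_b$ and the same cancellation yields the same formula. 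Uniqueness is immediate from the injectivity property of $\Habiro$ stated in the introduction, since $I_M$ is determined by its values at any infinite set of prime-power-order roots of unity. The main obstacle will be bookkeeping: tracking signs, the $(-1)^{\sigma_+}$ and $q^{-\sigma_+}$ from $(1-q^{-1})=-q^{-1}(1-q)$, and the shift between Lemma \ref{l1}'s exponent $-(k_i+2)(k_i+1)/2$ and the target $-k_i(k_i+3)/2$, which differ by $1$ and are absorbed into $q^{-\sigma_+}$. All the nontrivial analytic content is already in Lemma \ref{l1}, so the link case reduces by multiplicativity of the Laplace transform over algebraically split components to a routine component-wise check once the single-knot case of Theorem \ref{le1} is in hand.
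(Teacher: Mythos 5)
Your proposal follows essentially the same route as the paper: substitute Habiro's multi-component cyclotomic expansion (Proposition \ref{link}) into the surgery formula, apply the Laplace transform of Lemma \ref{l1} independently to each $\pm1$-framed component, cancel the Gauss sums against the denominator, and then invoke the properties of $\widehat{\Z[q]}$ for membership and uniqueness; the paper's own proof is just the one-sentence version of this. One small slip in your bookkeeping: the ratio $(1-q)^l/\bigl((1-q^{-1})^{\sigma_+}(1-q)^{\sigma_-}\bigr)$ equals $(-q)^{\sigma_+}=(-1)^{\sigma_+}q^{+\sigma_+}$, not $(-1)^{\sigma_+}q^{-\sigma_+}$, and it is this $q^{+1}$ per positively framed component that converts the exponent $-(k_i+2)(k_i+1)/2$ from Lemma \ref{l1} into $-k_i(k_i+3)/2$; your final displayed formula is nonetheless the correct one.
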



\v8

\section{Rational homology $3$--spheres with $H_1(M)=(\Z/2\Z)^n$}\label{so3}

In this section we define the unified invariant for $M \in\cM_n$.

\subsection{Normalization}
Suppose that the order of $\zeta$ is divisible by 4.
It's easy to show that the quantum invariant of $\RP^3$, which is
obtained by sugery on the unknot with framing 2, is given by
$$ \tau_{\RP^3}(\xi) = \frac{\gamma_{2}(\xi)}{(1+\zeta^{-1})\,
\gamma_{1}(\xi)} = \frac{\gamma_{-2}(\xi)}{(1+\zeta)\,
\gamma_{-1}(\xi)}=\frac{\zeta^{-1/2}\sqrt2}{(1+\zeta^{-1})}.$$ For
$M\in \cM_n$, we will use a normalization such that the connected
sum of $n$ projective spaces $\RP^3$ takes value 1:
$$ \tau'_M (\xi) := \frac{\tau_M(\xi)}{(\tau_{\RP^3}(\xi))^n}.$$

For    an odd root of unity $\xi$, we put
$$ \tau'_M (\xi):= \frac{\tau^{SO(3)}_M(\xi)}{(\tau^{SO(3)}_{\RP^3}(\xi))^n},
\;\;\;\;{\rm with}\;
\;\;\;\tau^{SO(3)}_{\RP^3}(\xi)=\frac{\gamma^1_{2}(\xi)}{
(1+\zeta^{-1})\gamma^1_{1}(\xi)}\, .$$

\subsection{Diagonalization}

 Recall that {\em linking pairing} on a finite Abelian group $G$ is a
non--singular symmetric bilinear map from $G\times G$ to $\BQ/\BZ$.
Two linking pairing $\nu, \nu'$ on respectively $G,G'$ are
isomorphic if there is an isomorphism between $G$ and $G'$ carrying
$\nu$ to $\nu'$. With the obvious block sum, the set of equivalence
classes of linking pairings is a semigroup.

One type of linking pairing is given by  non--singular square
symmetric matrices with integer entries: any such $n\times n$ matrix
$A$  gives rise to a linking pairing $\phi(A)$ on $G= \BZ^n /A
\BZ^n$ defined by $\phi(A)(v,v') = v^t A^{-1} v' \in \BQ \mod \BZ$,
where $v,v'\in \BZ^n$. If there is a {\em diagonal} matrix $A$ such
that a linking pairing $\nu$ is isomorphic to $\phi(A)$, then we say
that $\nu$ is {\em of diagonal type}. It is known that if the
linking pairing of a 3--manifold is of diagonal type, with diagonal
entries $d_1, d_2,\dots, d_k$, then $M$ can be obtained
 by surgery along an algebraically split link, with framings
$d_1, d_2, \dots, d_k$ on $k$ components and framings $\pm 1$ on the
others, (see \cite{Oh, Le;last}).

\begin{lem}\label{top} Suppose $H_1(M, \BZ) = (\BZ/2)^n$.
Then $M\#\BR P^3$ can be obtained from $S^3$ by surgery on an
algebraically split link with framing $2$ on $n+1$ components and
framings $\pm 1$ on the others.
\end{lem}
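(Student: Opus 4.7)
The plan is to reduce to the result quoted in the paragraph preceding the lemma: it suffices to exhibit the linking pairing of $M\#\RP^3$ as $\phi(A)$ for the diagonal integer matrix $A=2I_{n+1}$.

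First I would record that linking pairings are additive under connected sum and that $\RP^3=S^3(U_2)$ has linking pairing $\phi(2)=\langle 1/2\rangle$ on $\BZ/2$. Hence
$$\nu_{M\#\RP^3}\;\cong\;\nu_M\oplus\langle 1/2\rangle,$$
a non--degenerate symmetric form on $(\BZ/2)^{n+1}$ with values in $\tfrac{1}{2}\BZ/\BZ$.

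The main step is to invoke the classification of non--degenerate symmetric bilinear forms on $(\BZ/2)^m$ with values in $\BQ/\BZ$: such a form $\nu$ is of one of two types, the \emph{diagonal (odd) type}, where there exists $v$ with $\nu(v,v)=1/2$ and $\nu\cong\langle 1/2\rangle^{\oplus m}$; or the \emph{hyperbolic (even) type}, where $\nu(v,v)=0$ for every $v$ and $\nu\cong H^{\oplus m/2}$. The generator of the $\langle 1/2\rangle$ summand in $\nu_M\oplus\langle 1/2\rangle$ has self--pairing $1/2$, ruling out the second alternative, so $\nu_{M\#\RP^3}\cong\langle 1/2\rangle^{\oplus(n+1)}=\phi(2I_{n+1})$. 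Applying the cited surgery--realization theorem from \cite{Oh, Le;last} then yields the desired surgery description of $M\#\RP^3$, with framing $2$ on $n+1$ components and framings $\pm 1$ on the remaining components.

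The principal obstacle is the classification statement itself. I would prove it by induction on $m$: pick $v$ with $\nu(v,v)=1/2$, use non--degeneracy to split $G=\langle v\rangle\oplus v^\perp$ (the intersection is trivial because $v\notin v^\perp$, and dimensions match by non--degeneracy), and iterate on the restriction $\nu|_{v^\perp}$. If at some stage the restriction is purely even, isomorphic to $H^{\oplus k}$, one applies the classical exchange identity
$$\langle 1/2\rangle\oplus H\;\cong\;\langle 1/2\rangle^{\oplus 3},$$
verified by exhibiting an explicit orthogonal basis of square--$1/2$ vectors in $(\BZ/2)^3$, to absorb each hyperbolic summand and obtain $\nu\cong\langle 1/2\rangle^{\oplus m}$.
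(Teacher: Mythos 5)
Your argument is correct and, at its core, the same as the paper's, with the classification input made self-contained. The paper cites the Kawauchi--Kojima/Wall classification of the semigroup of linking pairings, observes that on $(\BZ/2)^n$ the only possible blocks are $\phi(2)=\langle 1/2\rangle$ and $E_0^1$ (the latter is exactly your hyperbolic plane $H$), and applies the relation $E_0^1\oplus\phi(2)\cong\phi(2)^{\oplus 3}$ to absorb the hyperbolic summands once the extra $\BR P^3$ has been added. You instead prove the relevant slice of that classification from scratch: the standard odd/even dichotomy for nondegenerate symmetric bilinear forms over the field with two elements, which says such a form is either alternating (hence a sum of hyperbolic planes) or admits an orthonormal basis. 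After noting that the $\BR P^3$ summand supplies a vector of nonzero self-pairing, the form on $M\#\BR P^3$ is forced into the odd case, so it is $\langle 1/2\rangle^{\oplus(n+1)}$, and the cited surgery-realization result finishes. The key identity, $H\oplus\langle 1/2\rangle\cong\langle 1/2\rangle^{\oplus 3}$, is identical in both treatments; your version is more elementary and avoids invoking the full Kawauchi--Kojima generator-and-relation presentation, at the cost of a somewhat longer induction to establish the dichotomy, which the paper simply quotes.
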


\begin{proof}
The generators of the semi--group of linking pairings are known, see
\cite{Kojima,Wall}.
 Since subgroups of $H_1$ are of the form
$(\BZ/2\Z)^n$ only, from the list of generators we see that linking
pairing $\phi$ on $H_1(M,\BZ)$ must be the block sum of linking
pairings, each of the form $\phi(2)$ or $E_0^1$ in the notation of
\cite{Kojima}. Here $\phi(2)$ is considered as the $1\times 1$
matrix with the only entry $2$, and $E_0^1$ is a linking pairing on
$\BZ/2\times \BZ/2$.

Note that $\phi(2)$ is the linking pairing of $\BR P^3$. One of the
relations among generators says that $E_0^1 \oplus \phi(2) = \phi(2)
\oplus \phi(2) \oplus \phi(2)$. Thus, after adding a copy of $\BR
P^3$ the linking pairing of $M$ is of diagonal type, with entries
$2$ on the diagonal, and we get the result.
\end{proof}

\subsection{Unified invariant}\label{gener}

For any $M\in \cM_n$  by Lemma \ref{top}, $M'=M\#\BR P^3$ can be
obtained by surgery on an algebraically split $m$ component link
$L$. Let us assume that the first $s_+$ components of $L$ have
framing $2$, the next $s_-$ components have framing $(-2)$, the next
$l_+$ components are 1--framed, and the last components are
$(-1)$--framed. Although we can avoid the $-2$ framing, we add this
framing for the convenience of calculation. Note that $s_+ +s_-=n+1$,
since $M'\in \cM_{n+1}$.
The unified invariant of
$M$ is defined by
$$
I_{M}(v)= \frac{(1+v)^{n+1}}{1-q}\;
\sum_{k\geq 0} \; (q^{k+1})_{k+1} \; \times$$
$$ \left(\sum_{\max
k_i=k} C_{L,\fk}(v)\,\prod\limits^{s_+}_{i=1}
(-v)^{-k_i} B_{k_i} (v)\prod\limits^{n+1}_{i=s_+ +1} B_{k_i}(v)
 \prod\limits^{n+1+l_+}_{i=n+2}
(-1)^{k_i} q^{-\frac{k_i(k_i+3)}{2}}\right)
 \,
$$
where $B_{k}(v)=\left(\prod^k_{i=0} (1+v^{2i+1})\right)^{-1}$.

\subsection{Proof of Theorem \ref{Two}}
Let us first show that $I_M(v)\in {\G^S }$. Indeed,
 the denominator of $I_M$
contains only $\Phi_{4i+2}(v)$ for $i\in\BN$, which are invertible modulo
 $\Phi_s(v)$ for any $s\in S$ in $\Z[1/2][v]$ (compare the proof of 
Proposition \ref{zzz}, Part (a)).

We next  show that $\ev_\xi(I_M(v))=\tau'_{M'}(\xi)$. Observe
that $\tau'_{M'}(\xi)=\tau'_M(\xi)$ by the definition.
The proof is  an application of the Laplace transform method.
 Using Lemma \ref{l2} below we have
$$\ev_\xi\left(\frac{L_{2;\l}[(q^\l)_{k+1}(q^{-\l})_{k+1}]\, }{2(1-q^{-1})}
\right)\;\frac{\gamma_{2}(\xi)}{ \gamma_{1}(\xi)}
=\ev_\xi\left((-v)^{-k}\, (-v^2;-v)_{2k}\right)\, \tau_{\RP^3}(\xi)$$
$$
\ev_\xi\left( \frac{L_{-2;\l}[(q^\l)_{k+1}(q^{-\l})_{k+1}]\,
}{2(1-q)\,}\right) \;\frac{\gamma_{-2}(\xi)}{ \gamma_{-1}(\xi) }=
\ev_\xi\left((-v^2;-v)_{2k}\right)\, \tau_{\RP^3}(\xi)$$ In addition,
we use the following identity
$$\frac{(1-v)(-v^2;-v)_{2k}}{(q^{k+1})_{k+1}}= B_k(v)$$
 whose
proof is left to the reader.
Uniqueness of $I_M$  follows from Proposition \ref{zzz}, part $(c)$.
\qed

\begin{lem}\label{l2}
$$L_{-2;\l}(F_k(q^\l,q))=2(1-v)(-v^2;-v)_{2k}$$
$$L_{2;\l} (F_k(q^\l,q))=2(-1)^{k+1}v^{-k-1}(1-v)(-v^2;-v)_{2k}$$
\end{lem}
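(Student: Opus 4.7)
The plan is to mirror the proof of Lemma~\ref{l1}. First, the functional equation $F_k(x,q)=q^{k(k+1)}F_k(x,q^{-1})$, obtained by pulling out the factors $(1-xq^{-i})=-xq^{-i}(1-x^{-1}q^i)$ from $(x;q^{-1})_{k+1}(x^{-1};q^{-1})_{k+1}$, yields
$$L_{-b;\l}\bigl(F_k(q^\l,q)\bigr)=q^{k(k+1)}\,L_{b;\l}\bigl(F_k(q^\l,q^{-1})\bigr)$$
exactly as in Lemma~\ref{l1}. Specialized to $b=2$, this reduces the second identity of Lemma~\ref{l2} to the first, so it suffices to establish the formula for $L_{-2;\l}(F_k)$.

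Next, I would reuse the splitting $F_k=S_k+T_k$ with $S_k(q^\l,q)=(q^\l)_{k+1}(q^{-\l+1})_k$ from Lemma~\ref{l1}. Since $S_k(q^{-\l},q)=T_k(q^\l,q)$ and $L_{b;\l}$ depends only on the square of the $q^\l$-exponent, one has $L_{-2;\l}(F_k)=2L_{-2;\l}(S_k)$. Inserting the $q$-binomial expansion of $S_k$ already derived in the proof of Lemma~\ref{l1} and applying $L_{-2;\l}$, which sends $q^{(j-k)\l}\mapsto v^{(j-k)^2}$, gives, after collecting exponents of $v=q^{1/2}$, the explicit finite sum
$$L_{-2;\l}(S_k)=(-1)^k\, v^{2k^2+k}\sum_{j=0}^{2k+1}(-1)^j\,\qbinom{2k+1}{j}\,v^{2j^2-(4k+1)j},$$
whose $q$-binomial coefficients sit in base $q=v^2$. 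The task then becomes showing this sum equals $(1-v)(-v^2;-v)_{2k}=\prod_{i=0}^{2k}\bigl(1-v(-v)^i\bigr)$.

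The closed-form evaluation of this finite sum is what I expect to be the \emph{main obstacle}. It is the analogue of the Sears--Carlitz step at the end of Lemma~\ref{l1}, with the additional twist that the target product naturally sits in base $-v$ while the sum sits in base $q=v^2$. I would attempt it by partitioning the summation according to the parity of $j$ and recognising each half as a $q$-binomial series in base $-v$ matching the factorisation $(-v^2;-v)_{2k}=\prod_{i=1}^{k}(1-v^{2i+1})(1+q^i)$; alternatively, by specialising a terminating ${}_3\phi_2$ transformation (analogous to Gasper--Rahman~(III.14) used in Lemma~\ref{l1}) and then performing a short product manipulation bridging the two bases. This base-change is presumably the simplification credited to Dennis Stanton in the Acknowledgement.
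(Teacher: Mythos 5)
Your setup is correct and is precisely the paper's: the functional equation reducing $L_{2}$ to $L_{-2}$, the splitting $F_k=S_k+T_k$ so that $L_{-2;\l}(F_k)=2L_{-2;\l}(S_k)$, and the $q$-binomial expansion followed by $q^{a\l}\mapsto v^{a^2}$. Your displayed sum for $L_{-2;\l}(S_k)$ is equivalent to the paper's (after rewriting $(-1)^j\qbinom{2k+1}{j}_q$ as $q^{(2k+1)j-\binom{j}{2}}\,(q^{-2k-1})_j/(q)_j$ and collecting powers), so the intermediate state agrees.

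The genuine gap is that you never actually evaluate the finite sum, which is the entire content of the lemma. You flag it as ``the main obstacle'' and list two candidate strategies (split by parity of $j$, or apply a terminating ${}_3\phi_2$ transformation), but carry out neither; a proof cannot end with ``I would attempt.'' The paper executes your second suggestion concretely: it applies the Sears--Carlitz transformation (Gasper--Rahman (III.14)) with the specific specializations $a=q^{-2k-1}$, $c=-q^{-k}$, $z=q^{k+3/2}$, $b\to\infty$ --- note the nontrivial choice $c=-q^{-k}$, which introduces the sign that produces the base $-v$ in the answer --- reducing the sum to ${}_2\phi_1(-q^{-k-1/2},q^{-k};q^{-k+1/2};q)$, which then closes by the $q$-Vandermonde formula (II.6). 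Without naming those parameters and showing the resulting ${}_2\phi_1$ is summable, your argument does not establish that the sum equals $(1-v)(-v^2;-v)_{2k}$, so the lemma is not proved.
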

\begin{proof}
Recall that $L_{\pm 2;\l}(q^{a\l})=v^{\mp a^2}$.
We proceed by proving the first formula.
By the $q$--binomial theorem we get
$$L_{-2;\l}(F_k(q^\l,q))=
2 (-1)^kq^{k^2+k/2}\sum^{2k+1}_{j=0} \frac{(q^{-2k-1})_j}{(q)_j}
q^{j+j^2/2}\, .$$
The Sears--Carlitz transformation (eq. (III.14) in \cite{GR}) with
$a=q^{-2k-1}$, $c=-q^{-k}$, $z=q^{k+3/2}$ and $b\to \infty$
reduce this sum to $_2\phi_1(-q^{-k-1/2}, q^{-k};q^{-k+1/2},q)$
which can be computed by the $q$--Vandermode formula (eq. (II.6) in \cite{GR}).
The result follows. Note that $L_{2;\l}$ can be computed from $L_{-2;\l}$
by the same argument as in the proof of Lemma \ref{l1}.
\end{proof}

\noindent
{\bf Proof of Corollary \ref{cor-gen}.} Part $(a)$ follows, since
$\ev_\zeta(\G^S)=\Z[1/2][\zeta]$ if  ${\rm ord}(\zeta)\in S$.
Part $(b)$ is  the direct consequence of Proposition \ref{zzz}
$(c)$.

\subsection{Case $|H_1|=2$}

It is well--known that any $M\in \cM_1$ can be obtained by surgery
on an algebraically split link $L$.
 Let us assume that  $L$ is a link of $(m+1)$ components  numbered
by $0,1,\dots,m$, where the $0$--th component has framing $\pm
2$, the next $s$ components have framing 1, and the remaining ones
have framing $-1$.
The following
Proposition
 is the direct consequence of Theorem \ref{Two} and implies the part
of Theorem \ref{cor1} concerning the non--refined invariants.

\begin{pro}\label{h2} For $M_\pm$ as above, there exists an invariant
$$I_{M_+}(v)=\sum_{k_i\geq 0}   \left(\sum_{\max
k_i=k} C_{L,\fk}(v)\, (-v)^{-k_0}\prod_{i=k_0+1}^{k} (1+v^{2i+1})
 \prod\limits^{s}_{i=1}
(-1)^{k_i} q^{-\frac{k_i(k_i+3)}{2}}\right) (-v^2;-v)_{2k}
 \,
$$
and
$$I_{M_-}(v)=\sum_{k_i\geq 0}   \left(\sum_{\max
k_i=k} C_{L,\fk}(v)\prod_{i=k_0+1}^{k} (1+v^{2i+1})
 \prod\limits^{s}_{i=1}
(-1)^{k_i} q^{-\frac{k_i(k_i+3)}{2}}\right) (-v^2;-v)_{2k}
 \, ,
$$
such that $\ev_{\xi}(I_{M_\pm}(v))=\tau'_{M_\pm}(\xi)$
for any root of unity $\xi$. Moreover,
$I_{M_\pm}\in \ZZ$.
\end{pro}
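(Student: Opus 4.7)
The plan is to apply the Laplace transform method of Section 4 directly to the given surgery presentation of $M_\pm$, paralleling the proof of Theorem \ref{Two} but exploiting the more economical presentation of $M$ itself rather than $M\#\RP^3$. I start from the defining formula for $\tau_M(\xi)$ in Section \ref{def}, substitute Habiro's cyclotomic expansion (Proposition \ref{link}) for $J'_L$, and interchange the sum over colorings $(\l_0,\ldots,\l_m)$ with the sum over cyclotomic indices $\fk=(k_0,\ldots,k_m)$. Because $L$ is algebraically split, the colored Jones factor $(q^{1+\l_i})_{k_i}(q^{1-\l_i})_{k_i}$ associated to the $i$--th component depends only on $\l_i$, so each inner sum decouples into a Gauss--type sum $\sum_{\l_i}^{\xi} q^{b_i(\l_i^2-1)/4} F_{k_i}(q^{\l_i},q)$ that can be evaluated via the Laplace transform $L_{b_i;\l_i}$.

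For the $i\geq1$ components (framing $\pm1$) I apply Lemma \ref{l1}, which contributes the factor $(q^{k_i+1})_{k_i+1}$ together with $(-1)^{k_i}q^{-k_i(k_i+3)/2}$ for $+1$ framings. For the $0$--th component (framing $\pm2$) I apply Lemma \ref{l2}, which contributes $2(1-v)(-v^2;-v)_{2k_0}$, multiplied by $(-v)^{-k_0-1}(-1)^{k_0+1}$ in the $+2$ case. The coloring sums yield Gauss--sum ratios $\gamma_{\pm2}(\xi)/\gamma_{\pm1}(\xi)$; these combine with the normalization factor $\tau_{\RP^3}(\xi)^{-1}=(1+\zeta^{\mp1})\gamma_{\pm1}(\xi)/\gamma_{\pm2}(\xi)$ to contribute exactly a single $(1+\zeta^{\mp1})$ factor, which absorbs the $(1-v)$ from Lemma \ref{l2} together with the denominator $1-q^{\pm1}$ appearing in the surgery formula, precisely as in the displayed identity from the proof of Theorem \ref{Two}.

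The final packaging step converts the expression, which a priori involves $(q^{k_i+1})_{k_i+1}$ for $i\geq1$ and $(-v^2;-v)_{2k_0}$ for $i=0$, into the canonical form $\sum_k A_k(v)\,(-v^2;-v)_{2k}$ with $k=\max_ik_i$. This uses the identity $(1-v)(-v^2;-v)_{2k}=B_k(v)^{-1}(q^{k+1})_{k+1}$ recalled in the proof of Theorem \ref{Two}, together with the elementary telescoping
\[
\frac{(-v^2;-v)_{2k}}{(-v^2;-v)_{2k_0}}=\prod_{i=k_0+1}^{k}(1+q^i)\,\prod_{i=k_0+1}^{k}(1-v^{2i+1}),
\]
so that after cancelling the $(q^{k_i+1})_{k_i+1}$ factors against the pieces of $(q^{k+1})_{k+1}$ accounted for by the maximal-index components, only the products $\prod_{i=k_0+1}^k(1+v^{2i+1})$ survive. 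This matches the stated formulas for $I_{M_\pm}(v)$.

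Finally, membership $I_{M_\pm}(v)\in\ZZ$ is immediate from the expansion in Section 1.2: each coefficient $A_k(v)$ of $(-v^2;-v)_{2k}$ lies in $\Z[v^{\pm1}]$. Uniqueness follows from injectivity of the Taylor map $\ZZ\hookrightarrow\Z[[v-1]]$ (Proposition \ref{zz}(a)), since the evaluation $\ev_\xi(I_{M_\pm}(v))=\tau'_{M_\pm}(\xi)$ at all roots of unity determines the Taylor expansion. The main obstacle is the bookkeeping in the third step: one must check that the excess cyclotomic factors assemble cleanly into $\prod_{i=k_0+1}^k(1+v^{2i+1})$ rather than something more unwieldy, and that the $(-v)^{-k_0}$ twist in the $+2$ case is the only framing-dependent factor remaining after all cancellations.
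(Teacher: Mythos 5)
Your derivation is correct and follows exactly the route the paper intends — apply the Laplace transform method of Section 4 (together with Lemmas \ref{l1} and \ref{l2}) directly to the algebraically-split presentation of $M\in\cM_1$, exactly as in the proof of Theorem \ref{Two}, and then observe from the explicit formula that the coefficients of $(-v^2;-v)_{2k}$ lie in $\Z[v^{\pm1}]$ so that $I_{M_\pm}\in\ZZ$. The telescoping identity you use, and the tracking of the $(-v)^{-k_0}$ factor in the $+2$ case and its absence in the $-2$ case, both check out.

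One slip: your justification of uniqueness is off. Injectivity of the Taylor map $T:\ZZ\hookrightarrow\Z[[v-1]]$ (Proposition \ref{zz}(a)) does not by itself let you pass from ``$\ev_\xi(I)$ is prescribed at all roots of unity'' to ``the Taylor expansion of $I$ is determined'' — evaluation at roots of unity is not the same data as the power series at $v=1$. What you actually need is Proposition \ref{zz}(b): two elements of $\ZZ$ that agree under $\ev_\zeta$ for all $\zeta$ with $\operatorname{ord}(\zeta)\in\cT_k$ must coincide. Since $\ev_\zeta(I_{M_\pm})=\tau'_{M_\pm}(\xi)$ is prescribed at such roots of unity, uniqueness follows. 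This is a citation/phrasing fix rather than a substantive gap, as the correct ingredient is already in the paper.
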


\v8
\noindent
{\bf Example.} Let $L$ be the Whitehead link with framings $2$ and $-1$ and
 $M=S^3(L)$.
$$I_M(v)=\sum^\infty_k v^{-k(k+2)} (-v^2;-v)_{2k} $$
\v8


\noindent
\subsection{Proof of Corollary \ref{co2}}
(a)  follows from the fact that $\tau'_M(\xi) =\ev_\xi(I_M)$, and
for any $f\in \ZZ$, $ \ev_\xi(f) \in \BZ[\zeta]$.

(b) An infinite set of roots of unity of orders odd prime powers is
a set of the form $\cT_k$ in Proposition \ref{zz}, Part (b). Hence
Proposition \ref{zz} (b) implies the result.

(c) The Ohtsuki series is just the Taylor expansion of $I_M$.
 Observe that the Taylor series
in $(1-v)$ can be converted into a formal power series
in $(1-q)$ by
$$ v-1=(1+(q-1))^{1/2}-1=\sum^\infty_{n=1}{1/2 \choose n}(q-1)^n\, ,
$$
where ${1/2\choose n} \in \Z[1/2]$.

(d) The LMO invariant determines the Ohtsuki series via $sl_2$
weight system, see \cite{Oh}. \qed

\section{Refinements}
In this section we show that the Laplace transform method can
effectively be used  to define refinements of
the unified invariant.

\subsection{Spin case }\label{refdef}
Suppose  $M\in \cM_n$ can be obtained by surgery
on an algebraically split $m$ component
link $L$ as described in Section \ref{gener}, where
the first $n$ components of $L$ are $(\pm 2)$--framed, and the
remaining components are $(\pm 1)$--framed.
Suppose $\sigma_c$ is a spin structure on $M$
corresponding to the solution
$c$ of the characteristic equation. 
There are $2^n$ spin structures on $M$.

Let $$B_k(x,v)=
\frac{1}{2}\left(\prod\limits^k_{i=0}(1-v^{2i+1}) + x
\prod\limits^k_{i=0}(1+v^{2i+1})
\right)\prod\limits^k_{i=0}\frac{1}{1-q^{2i+1}}\, .$$

Then  we define the unified invariant of $(M,\sigma_c)$ as follows.
\be\label{invar}
I_{M, \sigma_c}(v)= \frac{(1+v)^{n}}{1-q}\;
\sum_{k\geq 0} \; (q^{k+1})_{k+1} \;
 \left(\sum_{\max
k_i=k} C_{L,\fk}(v)\,\prod\limits^{s_+}_{i=1}
(-v)^{-k_i}\times\right. \ee
$$\left. B_{k_i}((-1)^{c_i+k_i}, v)
\prod\limits^{n}_{i=s_+ +1} B_{k_i}((-1)^{c_i+1},v)
 \prod\limits^{n+l_+}_{i=n+1}
(-1)^{k_i} q^{-\frac{k_i(k_i+3)}{2}}\right)
 \,
$$

More generally,
 $M'=M\#\BR P^3$ can  be obtained by surgery
on an algebraically split link, i.e. $I_{M',\sigma'}$ is  defined
by (\ref{invar}). Assume that $\sigma'$ and $\sigma''$ are the two spin
structures on $M'$ whose restrictions to $M$ coincide with $\sigma$.
Then we define the unified invariant of $(M,\sigma)$ as follows.
\be\label{nondiag}
I_{M,\sigma}:=I_{M',\,\sigma'}+I_{M',\,\sigma''}\, . \ee
If the surgery matrix of $M$ is diagonalizable, then (\ref{nondiag})
coincides with (\ref{invar}), since $I_{M,\sigma}$ is multiplicative
with respect to the connected sum. Indeed, we have
$$ I_{M,\sigma}= I_{M,\sigma}\left(
I_{\BR P^3,\, \sigma_0} + I_{\BR P^3,\, \sigma_1}\right)=I_{M,\sigma}\left(
\frac{1}{1-v^{-1}} + \frac{-v^{-1}}{1-v^{-1}}\right)$$

Suppose  $ h\in H^1(M,\Z/2\Z)$
 assigns $1$ to all $n$ generators of $H_1(M,\Z/2\Z)$.
Recall that $S_k=\{2^k(2n+1)\,|\,n\in\N\}$.
Suppose $T=\cup_{k>2, k\in \N} S_k$ and 
 $T_1=\cup_{k>3, k\in \N} S_k$.

\begin{pro}\label{ququ}
For $M\in \cM_n$ and a spin structure $\sigma$ on $M$,
there exists an unique invariant $I_{M,\sigma}(v)
\in \G^{T}$, 
such that
for any root of unity $\zeta$ with
  ${\rm ord(\zeta)}\in T_1 $,
  $\ev_\xi(I_{M,\sigma}(v)) = \tau'_{M,\sigma}(\xi)$.
If ${\rm ord(\zeta)}\in S_3 $,
then  $\ev_\xi(I_{M,\sigma}(v)) = \tau'_{M,\sigma+h}(\xi)$.
\end{pro}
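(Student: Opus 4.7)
The plan is to follow the Laplace transform strategy of Section~\ref{so3}, adapting it to the parity-restricted sums that define refined quantum invariants. First I would verify that the formula (\ref{invar}) defines an element of $\G^T$. By Proposition~\ref{zzz}(a), $\G^T=\prod_{k\geq 3}\G^{S_k}$, so it suffices to check that every denominator in (\ref{invar}) is coprime to $\Phi_s(v)$ in $\G$ for each $s\in T$. The only denominators are the products $\prod_{i=0}^{k_i}(1-q^{2i+1})$ hidden in the $B_{k_i}(x,v)$'s, which factor over $\Z[v]$ into cyclotomic polynomials $\Phi_m(v)$ with $m\in\{1,2\}$ or $m=2(2j+1)$ for some $j\geq 1$. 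Each such $m$ has $2$-adic valuation at most $1$, while every $s\in T$ has $2$-adic valuation at least $3$; since $2$ is a unit in $\G$, the coprimality argument from the proof of Proposition~\ref{zzz}(a) gives $(\Phi_m,\Phi_s)=(1)$ in $\G$.

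The heart of the argument is a spin analogue of Lemma~\ref{l2}, namely the computation of
\[
\tilde L^{c}_{b}(k):= {\sum_{\lambda}}^{\xi,c+1} q^{b(\lambda^2-1)/4}\, F_k(q^\lambda,q)\qquad (b=\pm 2).
\]
I would use the parity decomposition $\mathbf 1_{\lambda\equiv c+1\,(\bmod\, 2)}=\tfrac12\bigl(1-(-1)^{\lambda+c}\bigr)$ to split $\tilde L^c_b(k)$ as half the full Laplace transform of Lemma~\ref{l2} minus a twisted sum with the insertion $(-1)^\lambda$. At evaluation $(-1)=\zeta^{{\rm ord}(\zeta)/2}$, so $(-1)^\lambda=v^{\lambda\cdot{\rm ord}(\zeta)/2}$; completing the square then shifts $\lambda\mapsto\lambda+{\rm ord}(\zeta)/4$ (an integer when $4\mid{\rm ord}(\zeta)$) and reduces the twisted sum to another Gauss sum of the same shape. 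Applying the $q$-binomial theorem followed by the Sears--Carlitz transformation and the $q$-Vandermonde summation as in the proof of Lemma~\ref{l2} should produce
\[
\tilde L^{c}_{-2}(k)=2(1-v)\,\ev_\xi\!\bigl(B_k((-1)^{c+\epsilon(\zeta)},v)\,(q^{k+1})_{k+1}\bigr)\,\gamma_{-2}(\xi),
\]
and an analogous formula for $b=+2$ carrying the extra factor $(-v)^{-k}$ (compare the two cases in Lemma~\ref{l2}). Here $\epsilon(\zeta)=0$ when ${\rm ord}(\zeta)\in T_1$ and $\epsilon(\zeta)=1$ when ${\rm ord}(\zeta)\in S_3$. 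Getting this sign tracking modulo $16$ correct is the main obstacle; the remainder of the $q$-hypergeometric manipulation parallels Section~\ref{so3}.

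With the spin Laplace lemma in hand, I would substitute Habiro's cyclotomic expansion (Proposition~\ref{link}) into the defining formula (\ref{dspin}) for $\tau_{M\#\RP^3,\sigma'}$, interchange the finite sums over coloring indices and the $k_i$'s, and apply $\tilde L^{c_i}_{\pm 2}$ to each $(\pm 2)$-framed component while Lemma~\ref{l1} handles the $(\pm 1)$-framed ones. The Gauss sum prefactors cancel against the $(\tau_{\RP^3}(\xi))^n$ in the denominator of $\tau'_{M\#\RP^3,\sigma'}$, and what remains is exactly the evaluation of (\ref{invar}). When ${\rm ord}(\zeta)\in T_1$ this yields $\tau'_{M,\sigma}(\xi)$; when ${\rm ord}(\zeta)\in S_3$ the sign $\epsilon(\zeta)=1$ replaces each $c_i$ by $c_i+1$ in the argument of the $B_{k_i}$'s, which is exactly the cohomological twist $\sigma\mapsto\sigma+h$ on $M$. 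For $M$ whose linking form is not of diagonal type, (\ref{nondiag}) together with multiplicativity of the refined invariants under connected sum with $\RP^3$ reduces the claim to the diagonal case already settled. Uniqueness of $I_{M,\sigma}\in\G^T$ follows from Proposition~\ref{zzz}(b) applied factor-by-factor in the decomposition $\G^T=\prod_{k\geq 3}\G^{S_k}$: for each $k\geq 3$, the evaluations at roots of unity of orders in $\cT_k$ determine the corresponding factor, and this data is supplied by $\tau'_{M,\sigma}$ when $k\geq 4$ and by $\tau'_{M,\sigma+h}$ when $k=3$.
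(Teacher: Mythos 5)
Your proposal is correct and takes essentially the same route as the paper's proof: membership in $\G^{T}$ via the adjacency/non-coprimality criterion of Proposition~\ref{zzz}(a), a refined Laplace transform obtained by inserting a parity indicator into the sum, reduction to the diagonal case by connect-summing with $\RP^3$, and uniqueness from Proposition~\ref{zzz}(b). Two small remarks. First, your parity decomposition $\tfrac12\bigl(1-(-1)^{\lambda+c}\bigr)$ followed by completing the square is the paper's operator identity $L^\e_{\pm 2}=\tfrac12\bigl(L_{\pm 2}+(-1)^{\e+\chi}L_{\pm 2}|_{v\to -v}\bigr)$ in disguise — inserting $(-1)^\lambda$ is exactly the substitution $v\mapsto -v$ in the image of $L_{\pm 2}$ — and the sign $\epsilon(\zeta)$ you leave as ``the main obstacle'' is the paper's $\chi$, determined by whether ${\rm ord}(\zeta)\equiv 8$ or $0\pmod{16}$; the paper dispatches it cleanly by noting that $\gamma_{\pm 2}(\xi)=\gamma^1_{\pm 2}(\xi)$ when $k=3$ and $\gamma_{\pm 2}(\xi)=\gamma^0_{\pm 2}(\xi)$ when $k>3$, which you should record explicitly if you want a complete proof. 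Second, a minor inaccuracy: in listing the cyclotomic factors of $\prod_{i=0}^{k}(1-q^{2i+1})=\prod_{i=0}^{k}(1-v^{2(2i+1)})$ you allow only $m\in\{1,2\}$ or $m=2(2j+1)$, but odd $m>1$ (e.g.\ $\Phi_3$) also occur; this does not hurt the argument since such $m$ still have $2$-adic valuation $\le 1$ and hence are non-adjacent to every element of $T$ over $\Z[1/2]$. Finally, the passage to the non-diagonal case rests on the additivity $\tau'_{M,\sigma}=\tau'_{M',\sigma'}+\tau'_{M',\sigma''}$ over the two extensions of $\sigma$ to $M'=M\#\RP^3$ (together with the multiplicativity of $I_{\cdot,\cdot}$ used only to check consistency of (\ref{nondiag}) with (\ref{invar})); calling this ``multiplicativity'' alone slightly misstates what is being used.
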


  Proposition \ref{ququ} implies  Theorem \ref{main_spin}.

\begin{proof}
Let us first show that $I_{M,\sigma_\e}(v)
\in \Z[1/2][v]^{T}$.
Recall that $(\Phi_n)+(\Phi_m)=(1)$ in $\Z[1/2][v]$ if and only if
$n$ and $m$ are not adjacent in Habiro sense.
The denominator of (\ref{invar})
consists of all $\Phi_i(v)$ with $i|4j+2$, $j\in \BN$.
But such $i$ is not adjacent to elements of $T$, i.e.
$\Phi_i(v)$ are invertible in $\G^{T}$ for all $i$.

We next show that the evaluation $I_{M,\sigma_\e}(v)$
coincides with the renormalized refined Witten--Reshetikhin--Turaev
 invariant.
We again use the Laplace transform method.

For $k=3$, we define
the refined Laplace transforms
$L^\e_{\pm 2;\l}:\BZ[q^{\pm \l},q^{\pm 1}]\to \BZ[q^{\pm 1/2}]$
as $\BZ[q^{\pm 1}]$--linear operators with
$$L^0_{\pm 2;\l}(q^{a\l})=\left\{
\begin{array}{ll}
 v^{\mp a^2}  &
{\rm for}\;\; a\;\;\; {\rm odd} \\
 0& {\rm otherwise} \end{array} \right.$$
$$L^1_{\pm 2;\l}(q^{a\l})=\left\{
\begin{array}{ll}
 v^{\mp a^2}  &
{\rm for}\;\; a \;\;\;{\rm even} \\
 0& {\rm otherwise} \end{array} \right.$$
For $k>3$,   the
previous definitions of $L^0_{\pm 2;\l}$ and $L^1_{\pm 2;\l}$
should be interchanged.
Then, for any  Laurent
polynomial $F(q^\l,q)$ and any root of unity $\zeta$ with
${\rm ord(\zeta)}\in S_k$,
 the following equation holds.
\begin{equation}
\gamma_{\pm 2}(\xi)\;\;\ev_\xi\left( L^\e_b (F(q^\l,q))\right) =  {\sum_{
\l}}^{\xi, \e} q^{\pm (\l^2-1)/2}\, F(q^\l,q)\,
\label{new1}\end{equation}
Moreover, $\gamma_{\pm 2}(\xi)=\gamma^1_{\pm 2}(\xi)$ if $k=3$
and $\gamma_{\pm 2}(\xi)=\gamma^0_{\pm 2}(\xi)$ if  $k>3$.
In addition, we have
$$(L^1_{\pm2}-L^0_{\pm2})(q^{a\l})=(-1)^a (-1)^{\chi+1}
L_{\pm2}(q^{a\l})=(-1)^{\chi+1}(-1)^a v^{\mp a^2}=
(-1)^{\chi+1} L_{\pm2}|_{v\to -v} (q^{a\l})$$
where $\chi=1$ if $k=3$ and zero otherwise.
This allows us to express  the refined Laplace transforms
in terms of $L_{\pm 2;\l}$:
$$L^\e_{\pm 2}=\frac{1}{2}\left(\; L_{\pm 2}+
  (-1)^{\e+\chi}L_{\pm 2}|_{v\to -v}
\;\right)\, . $$

Let us recall
that
 $ F^0_{L}(\xi) = F_{L}(\xi)$
 (compare \cite{KM}, \cite{B}).

The assertion in the diagonal case follows now from the next two formulas:
$$
\ev_\xi\left( \frac{L_{2;\l}(F_k)|_{v\to -v}\, }{2(1-q^{-1})}\right)
\;\frac{\gamma_{2}(\xi)} {\gamma_{1}(\xi)}=\;
-\frac{\zeta^{-k}(1+\zeta)}{1-\zeta}\; (-\zeta^2;\zeta)_{2k}\,
\tau_{\RP^3}(\xi)$$
$$ \ev_\xi\left(
\frac{L_{-2;\l}(F_k)|_{v\to -v}\, }{2(1-q)}\right)\,
\frac{\gamma_{-2}(\xi)}{ \gamma_{-1}(\xi) }=\;
\frac{1+\zeta}{1-\zeta}\;(-\zeta^2;\zeta)_{2k}\,
\tau_{\RP^3}(\xi)$$

Observe that $\tau'_{M,\sigma}(\xi)=
\tau'_{M',\sigma'}(\xi)+ \tau'_{M',\sigma''}(\xi)$.
 Uniqueness of $I_{M,\sigma}$ is provided by Proposition
\ref{zzz}, Parts $(a),(b)$. 
\end{proof}

Note that if $M\in\cM_1$, then $I_{M,\sigma}\in \frac{1}{1-v}\ZZZ$.
Thus we have the part of Theorem \ref{cor1} concerning spin refinements.

\subsection{Cohomological case}
Let us first assume that  $M\in\cM_n$ 
can be obtained by surgery on an algebraically split link $L$
as above.
Suppose $\sigma_c\in H^1(M,\Z/2\Z)$ is induced by the solution
$c$ of the following equation: $L_{ij} c_j=0\pmod 2$.

We define
\be\label{invar1}
I_{M, \sigma_c}(v)= \frac{(1+v)^{n}}{1-q}\;
\sum_{k\geq 0} \; (q^{k+1})_{k+1} \;
 \left(\sum_{\max
k_i=k} C_{L,\fk}(v)\,\prod\limits^{s_+}_{i=1}
(-v)^{-k_i}\times\right. \ee
$$\left. B_{k_i}((-1)^{c_i+k_i+1} I, v)
\prod\limits^{n}_{i=s_+ +1} B_{k_i}((-1)^{c_i+1} I,v)
 \prod\limits^{n+l_+}_{i=n+1}
(-1)^{k_i} q^{-\frac{k_i(k_i+3)}{2}}\right)
 \,
$$

More generally, $M'=M\#\BR P^3$ can always be obtained by surgery
on an algebraically split link. We set
$I_{M,\sigma}:=I_{M',\sigma'}+I_{M',\sigma''}$
where $\sigma'|_{M}=\sigma''|_M=\sigma$.

\begin{Thm} \label{main-coho} For $M\in \cM_n$,
  a cohomological class  $\sigma$ on $M$,
 there exists a unique
 invariant $I_{M,\sigma}(v)\in \G^{S_2}$,
such that
 for
any $4k$--th root of unity $\zeta$ with odd $k$ and $\zeta^{k^2}=(-1)^{\chi}
 I$,  we have
 $\ev_{\xi}(I_{M,\sigma}(v)) = \tau'_{M,{\sigma+\chi h}}(\xi)$.
\end{Thm}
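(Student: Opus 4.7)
The proof will parallel that of Proposition~\ref{ququ}, with the refined Laplace transforms now tailored to the case when the order of $\xi$ is $2\pmod 4$. First I would verify that $I_{M,\sigma}(v) \in \G^{S_2}$: the denominators in (\ref{invar1}) are products of $(1-q)$ and $(1-q^{2i+1})$, so the only cyclotomic polynomials appearing are $\Phi_d(v)$ with $d$ odd or $d=2m$ with $m$ odd. None of these are adjacent in Habiro's sense to any element of $S_2=\{4(2n+1) : n\in \N\}$ in the ring $\G = \Z[1/2][v]$ (here one uses that $2$ is invertible), hence all such $\Phi_d$ are units in $\G^{S_2}$.

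Next, for $\zeta$ a $4k$--th root of unity with $k$ odd, so that $\xi=\zeta^2$ has order $2k \equiv 2\pmod 4$, I would introduce cohomological Laplace transforms $L^{\e,\mathrm{coh}}_{\pm 2;\l}$ characterized by
$$ \sum\nolimits^{\xi,\e}_{\l}\, q^{\pm(\l^2-1)/2}\, F(q^\l,q) \;=\; \gamma^\e_{\pm 2}(\xi)\,\ev_\xi\bigl(L^{\e,\mathrm{coh}}_{\pm 2;\l}(F)\bigr), $$
obtained by square completion on the parity--restricted Gauss sum. As in the spin case, these operators admit the decomposition
$$ L^{\e,\mathrm{coh}}_{\pm 2} \;=\; \tfrac{1}{2}\bigl(L_{\pm 2} + (-1)^{\e+\chi}\, I\cdot L_{\pm 2}|_{v\to -v}\bigr), $$
where the factor $I$ arises from the evaluation $\zeta^{k^2} = (-1)^\chi I$ and is precisely what matches the formal appearance of $I$ inside the factors $B_{k_i}(\pm I,v)$ in definition (\ref{invar1}).

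With these operators in hand, the plan is to substitute Habiro's cyclotomic expansion (Proposition~\ref{link}) into the surgery formula for $\tau'_{M,\sigma+\chi h}(\xi)$, interchange the summations over the $\l_i$ and the $k_i$, and apply the refined Laplace transforms componentwise. The computations of Lemma~\ref{l2}, together with their $v \to -v$ variants, then match (\ref{invar1}) termwise when the linking pairing of $M$ is diagonalizable. For general $M \in \cM_n$ I would reduce to the diagonal case via the connected sum $M' = M\#\RP^3$ of Lemma~\ref{top}, using the additivity rule $I_{M,\sigma} = I_{M',\sigma'} + I_{M',\sigma''}$ from (\ref{nondiag}). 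Uniqueness of $I_{M,\sigma}$ follows from the injectivity of evaluation on $\G^{S_2}$ at roots of unity of order in $\{4p^e : e\in \N\}$ for any fixed odd prime $p$, which is the cohomological analog of Proposition~\ref{zzz}(b): $S_2$ is connected in $\G$ since $2$ is invertible, and $4 \in S_2$ is adjacent to every element of $\{4p^e\}$.

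The main obstacle will be the bookkeeping in the middle step: setting up $L^{\e,\mathrm{coh}}_{\pm 2;\l}$ so that it simultaneously encodes the parity restriction in $\sum\nolimits^{\xi,\e}_{\l}$ and correctly produces the factor $(-1)^\chi I$ coming from $\zeta^{k^2}$. In particular, tracking how the shift $\sigma \mapsto \sigma + \chi h$ arises from the parity $\chi$ of $\zeta^{k^2}$ propagated across all $n+1$ two--framed surgery components, and confirming that after all these $I$'s are combined the resulting invariant descends to $\G^{S_2}$ rather than merely to the extension $\Z[1/2,I][v]^{S_2}$, is the delicate part of the argument.
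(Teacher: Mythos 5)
Your proposal matches the paper's proof: the paper likewise introduces refined Laplace transforms $L^\e_{\pm 2;\l}$ carrying a factor of $I$, expresses them as $L_{\pm 2}^\e=\frac{1}{2}\bigl(L_{\pm 2} \pm (-1)^{\e+1}\chi I\, L_{\pm 2}|_{v\to-v}\bigr)$, and then declares "the rest is analogous to the previous subsection" (spin case), with reduction to the diagonal case via $M\#\RP^3$ and uniqueness from Proposition~\ref{zzz}. The only thing to tighten in your sketch is the precise sign/exponent in the decomposition — the paper's version has a framing-dependent $\pm$ in front of the $I$-term which your "$+(-1)^{\e+\chi}$" does not capture — but that is exactly the bookkeeping you correctly flagged as the delicate step.
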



\begin{proof}
We define the refined Laplace transforms as follows:
$$L^\e_{\pm 2;\l}(q^{a\l})=\frac{1\mp (-1)^{a+\e}\chi I}{2}\;  v^{\mp a^2}$$
Then  we have
$$\g_{\pm2}(\xi)\, \ev_\xi \left(L^\e_{\pm 2;\l}(q^{a\l})\right) =
 {\sum_{\l}}^{\xi,\e} q^{\pm\frac{(\l^2-1)}{2}}
q^{a\l}\, .$$
By shifting $\l\to \l+r/2$, we see that $\gamma^1_{\pm 2,r}=
\pm \chi I\gamma^0_{\pm2,r}$.
This allows us to express the refined Laplace transforms
through the non--refined one.
$$L_{\pm 2}^1(q^{a\lambda }) - L_{\pm 2}^0(q^{a\lambda }) = \pm \chi
 IL_{\pm 2}(q^{a\lambda})|_{v\to -v}$$
$$
L_{\pm 2}^\e=\frac{1}{2}(L_{\pm 2} \pm (-1)^{\e+1}\chi I L_{\pm 2}|_{v\to-v})$$

The rest is analogous to the previous subsection.
\end{proof}

\bibliographystyle{amsplain}

\end{document}